\def\marginpar#1{\ignorespaces}
\newtheorem{thm}{Theorem}
\newtheorem{prop}[thm]{Proposition}
\newtheorem{lem}[thm]{Lemma}
\theoremstyle{definition}
\newtheorem{rmq}[thm]{Remark}
\def\eqlabel#1{\def\@currentlabel{#1}}
\def\formula#1{\def\@tempa{#1}\let\@tempb\theequation\def\theequation{%
\hbox{#1}}\def\@currentlabel{(\theequation)}$$}
\def\endformula{\leqno\hbox{(\@tempa)}$$\@ignoretrue\let\theequation\@tempb}
\def\given{\hskip5\p@\relax\vrule\@width.4\p@\hskip5\p@\relax}
\newcommand{\open}[1]{%
\par\normalfont\topsep6\p@\@plus6\p@\trivlist\item[\hskip\labelsep\itshape#1%
\@addpunct{.}]\ignorespaces}
\DeclareRobustCommand{\close}[1]{%
  \ifmmode 
  \else \leavevmode\unskip\penalty9999 \hbox{}\nobreak\hfill
  \fi
  \quad\hbox{$#1$}}
\newlength{\toskip}\settowidth{\toskip}{(\theequation)}
\renewcommand{\theequation}{\thesection.\arabic{equation}}
\numberwithin{equation}{section}
\renewcommand\H{\mathcal{H}}
\newcommand{\RR}{{\mathbb R}}
\newcommand{\dd}{\text{d}}
\newcommand{\ent}{\mathop{\rm Ent\,}\nolimits}
\newcommand{\Rom}[1]{\uppercase\expandafter{\romannumeral #1\relax}}
\newtheorem*{assumption*}{\assumptionnumber}
\providecommand{\assumptionnumber}{}
\newenvironment{assumption}[2]
 {%
  \renewcommand{\assumptionnumber}{\textbf{#1} $\textbf{#2}$}%
  \begin{assumption*}%
  \protected@edef\@currentlabel{ $\textbf{#2}$}%
 }
 {%
  \end{assumption*}
 }
\begin{document}

\title{The kinetic Fokker-Planck equation with mean field interaction}


\author[Arnaud Guillin]{\textbf{\quad {Arnaud} Guillin $^{\spadesuit}$ \, \, }}
\address{{\bf {Arnaud} GUILLIN}\\ Laboratoire de Math\'ematiques Blaise Pascal, CNRS UMR 6620, Universit\'e Clermont Auvergne,
Campus des C\'ezeaux 3, place Vasarely , F-63178 Aubi\`ere.} \email{arnaud.guillin@uca.fr}

\author[W. Liu]{\textbf{\quad {Wei} Liu $^{\clubsuit}$ \,  }}
\address{{\bf Wei LIU} School of Mathematics and Statistics, Wuhan University, Wuhan, Hubei 430072, P.R. China; Computational Science Hubei Key Laboratory, Wuhan University, Wuhan, Hubei 430072, P.R. China.} \email{wliu.math@whu.edu.cn}

\author[Liming Wu]{\textbf{\quad {Liming} Wu $^{\spadesuit}$ \, \, }}
\address{{\bf {Liming} WU}\\ Laboratoire de Math\'ematiques Blaise Pascal, CNRS UMR 6620, Universit\'e Clermont Auvergne,
Campus des C\'ezeaux 3, place Vasarely , F-63178 Aubi\`ere.} \email{li-ming.wu@math.univ-bpclermont.fr}

\author[Chaoen Zhang]{\textbf{\quad {Chaoen} Zhang $^{\spadesuit}$ \, \, }}
\address{{\bf {Chaoen} ZHANG}\\ Laboratoire de Math\'ematiques Blaise Pascal, CNRS UMR 6620, Universit\'e Clermont Auvergne,
Campus des C\'ezeaux 3, place Vasarely , F-63178 Aubi\`ere.} \email{chaoen.zhang@uca.fr}

\maketitle

 \begin{center}

\textsc{$^{\spadesuit}$  Universit\'e Clermont Auvergne}
\smallskip

\textsc{$^{\diamondsuit}$ Wuhan University}
\smallskip

\end{center}

\begin{abstract}
We study the long time behaviour of the kinetic Fokker-Planck equation with mean field interaction, whose limit is often called Vlasov-Fkker-Planck equation. We prove a uniform (in the number of particles) exponential convergence to equilibrium for the solutions in the weighted Sobolev space $H^1(\mu)$ with a rate of convergence which is explicitly computable and independent of the number of particles. The originality of the proof relies on functional inequalities and hypocoercivity with Lyapunov type conditions, usually not suitable to provide adimensional results.
\end{abstract}

\bigskip

\textit{ Key words : Hypocoercivity, mean field interaction, Poincar\'e inequalities, logarithmic Sobolev inequality, Lyapunov conditions}
\bigskip

\section{Introduction}

In this paper we are interested in the system of $N$ particles moving in $\RR^d$ with mean field interaction
\begin{equation}\label{kFPsde2}
\left\{
\begin{aligned}
\dd x^i_t &= v^i_t\dd t\\
\dd v^i_t &= \sqrt{2}\dd B^i_t -v^i_t\dd t -\nabla U(x^i_t)- \frac{1}{N}\sum_{1\leq j\leq N}\nabla W(x^i_t-x^j_t)\dd t
\end{aligned}
\right.
\end{equation}
where $x_t^i, v_t^i$ are respectively the position and the velocity of the $i$-th particle, and $(B_t^i)_{t\geq 0} (1\leq i\leq N)$ are independent standard Brownian motions on $\RR^d$, $U:\RR^d\rightarrow \RR$ is the confinement potential, and $W:\RR^d\rightarrow \RR$ is the interaction potential. Equivalently, denote $(x_t,v_t)=((x_t^1,x_t^2,\cdots,x_t^N),(v_t^1,v_t^2,\cdots,v_t^N))$, the particle system can be rewritten in a more compact form
\begin{equation}\label{kFPsde1}
\left\{
\begin{aligned}
\dd x_t &= v_t\dd t\\
\dd v_t &= \sqrt{2}\dd B_t -v_t\dd t -\nabla V(x_t)\dd t
\end{aligned}
\right.
\end{equation}
where $B_t=(B_t^1, B^2_t,\cdots,B_t^N)$ and the function $V$ is the whole potential with mean field interaction given by
\begin{equation}\label{VNd}
V(x_1,x_2,\cdots,x_N)= \sum\limits_{1\leq i\leq N}U(x_i) + \frac{1}{2N}\sum\limits_{1\leq i,j\leq N}W(x_i-x_j).
\end{equation}

This damping stochastic Newton equation, though non-elliptic, is hypoelliptic. It has a unique invariant probability measure $\mu(\dd x, \dd v)$ on $\RR^{Nd}\times \RR^{Nd}$ given by
\[
\mu(\dd x,\dd v) = \frac{1}{Z}e^{-V(x)}\cdot(2\pi)^{-\frac{Nd}{2}}e^{-\frac{|v|^2}{2}}\dd x\dd v
\]
where $x=(x_1,x_2,\cdots,x_N), v= (v_1,v_2,\cdots,v_N)$ with $x_i,v_i \in \RR^d$ for $1\leq i\leq N$, and $Z$ is the normalization constant (called often the partition function). Denote
\[
\dd m(x)=\frac{1}{Z}e^{-V(x)}\dd x,\quad \dd\gamma(v)=(2\pi)^{-\frac{Nd}{2}}e^{-\frac{|v|^2}{2}}\dd v
\]
and so $\mu(\dd x,\dd v)=\dd m(x)\dd\gamma(v)$.

The density function $h_t(x,v)= \dd\mu_t(x,v)/\dd\mu(x,v)$ of the law $\mu_t$ of the diffusion process $(x_t,v_t)$ with respect to the equilibrium measure $\mu$ satisfies the kinetic Fokker-Planck equation on $\RR^{Nd}\times \RR^{Nd}$
\begin{equation}\label{kFP1}
  \frac{\partial h}{\partial t} + v\cdot \nabla_x h - \nabla_x V(x)\cdot\nabla_vh = \Delta_vh - v\cdot\nabla_vh
\end{equation}
subject to the initial condition $h_0(x,v)= \dd\mu_0(x,v)/\dd\mu(x,v)$. Here $a\cdot b$ denotes the Euclidean inner product of two vectors $a$ and $b$, $\nabla_x$ stands for the gradient with respect to the position variable $x\in \RR^{Nd}$, whereas $\nabla_v$ and $\Delta_v$  stand for the gradient and the Laplacian with respect to the velocity variable $v\in \RR^{Nd}$, respectively. And we shall adopt the notation $\nabla^2$ for the Hessian operator, and $\nabla_{xv}^2=(\partial^2/\partial x_k\partial v_l)_{1\leq k,l\leq Nd}$ for the mixed Hessian operator.

We denote by $L^2(\mu)$ the weighted $L^2$ space with respect to the reference measure $\mu$ for which $||\cdot||$ is the $L^2(\mu)$-norm and $\langle\cdot,\cdot\rangle$ is the associated inner product. Denote by $H^1(\mu)$ the weighted $L^2$-Sobolev space of order $1$ with respect to $\mu$, and the norm  $||\cdot||_{H^1(\mu)}$ is given by
\begin{equation}
  ||h||_{H^1(\mu)}^2 := \int h^2\dd\mu + \int \left(|\nabla_x h|^2(x,v) + |\nabla_vh|^2(x,v)\right)\dd\mu(x,v).
\end{equation}

When the probability measure $m$ satisfies a Poincar\'e inequality, and when $\nabla^2V$ satisfies some "boundedness" condition (see the condition \eqref{HypV2Eq} below), C. Villani \cite{Villani} established the exponential convergence of $h_t$ in $H^1(\mu)$. This is the starting of the term "hypocoercivity" method, which was before  initiated by \cite{DV,HN,H1}. An other approach was initiated by Dolbeault-Mouhot-Schmeiser \cite{DMS1,DMS2} with the advantage of  not needing a priori regularity results. Their $H^1$-convergence holds under the same assumptions. Note that is has triggered quite a lot of results for kinetic equations \cite{DR,MM,Cal,Cao1,Cao2,E}. However Both Villani's and DMS's approach on the exponential convergence rate depends highly on the number $N$ of particles. To complete this review on the speed to equilibrium for the Langevin equation, let us mention that a probabilistic approach based on coupling \cite{EGZ} or Lyapunov conditions \cite{Talay,Wu} was also developed but, as is often usual for Meyn-Tweedie's approach relying on Lyapunov conditions, the rate also depends (even more dramatically) on the dimension. Note however that, under very strong convexity assumptions, Bolley$\&$-al \cite{BGM10} obtained a uniform decay in Wasserstein distance for the mean field Langevin equation by a coupling approach. Very recently, an interesting work by Monmarch\'e \cite{M17} established an entropic decay, using Villani's hypocoercivity, but still under strong convexity assumptions, and Baudoin$\&$-al \cite{BGH} mixed Bakry's $\Gamma_2$ approach with hypocoercivity to obtain $H^1$ exponential decay even in a non regular case, i.e. Lennard-Jones potential, but with a rate still depending on the dimension. Note also that for a non mean-field case but oscillators Menegaki \cite{M19} obtained a dimension dependent convergence to equilibrium. The objective of this work is to establish, and it seems to be the first result under non convexity assumptions on the potential, some exponential convergence in $H^1(\mu)$, uniform in the number $N$ of particles. The originality of our approach is that we will combine Villani's hypocoercivity with recent uniform functional inequality and Lyapunov conditions (usually not suitable to provide adimensional results).\\

As an other motivation to get uniform in the number of particles result, the linear diffusion process $(x_t,v_t)_{t\geq 0}$ in $\RR^{Nd}\times\RR^{Nd}$ is the mean field approximation of the self-interacting diffusion process $(\bar{x}_t,\bar{v}_t)_{t\geq 0}$ in $\RR^d\times \RR^d$ which evolves according to
\begin{equation}\label{kFPsde3}
\left\{
\begin{aligned}
\dd \bar{x}_t &= \bar{v}_t\dd t\\
\dd \bar{v}_t &= \sqrt{2}\dd \bar{B}_t -\bar{v}_t\dd t -\left[\nabla U(\bar{x}_t)+ \int \nabla W(\bar{x}_t-y)u_t(\dd y)\right]\dd t
\end{aligned}
\right.
\end{equation}
where $u_t(\dd y)$ is the law of $\bar{x}_t$, and $\bar{B}$ is a standard Brownian motion on $\RR^d$. Its equivalent analytic version is: the density function $g_t=g(t,\bar{x},\bar{v})$ of the law of $(\bar{x}_t,\bar{v}_t)_{t\geq 0}$ with respect to the Lebesgue measure $\dd \bar{x}\dd \bar{v}$ satisfies the following self-consistent Vlasov-Fokker-Planck equation on $\RR^d\times\RR^d$
\begin{equation}\label{VFP}
 \frac{\partial g}{\partial t} + \bar{v}\cdot \nabla_{\bar{x}} g - (\nabla U(\bar{x})+ \nabla W* \pi g)\cdot\nabla_{\bar{v}}g = \Delta_{\bar{v}}g +\nabla_{\bar{v}}\cdot(\bar{v}g)
\end{equation}
subject to the initial condition that $g_0(\bar{x},\bar{v})$ is given by the law of $(x^1_0,v^1_0)$, where
\[
\pi g(\bar{x}) = \int_{\RR^d}g(t,\bar{x},w)\dd w
\]
is the macroscopic density in the space of positions $\bar{x}\in \RR^d$. This kinetic equation describes the evolution of clouds of charged particles, and it is significant in plasma physics (see Villani \cite{Villani} and references therein). Only very few results on the long time behavior of this nonlinear equation is known, see however \cite{Villani} in the compact valued case or Bolley {\em et al.}\cite{BGM10} in the strictly convex case (see also \cite{M17}). Our results are a first step towards such a long time behavior but the $H^1$ convergence does not behave well with respect to the dimension. We thus plan for a future work to consider entropic convergence and propagation of chaos for the mean field Langevin equation.\\\smallskip

Let us finish this introduction with the plan of our paper. The next Section presents the main assumptions and the main results, i.e. a uniform exponential convergence to equilibrium in $H^1$ under non convex assumptions. It also presents a crucial tool: Villani's hypocoercivity theorem. Its details will be given in Section 3. Section 4 contains useful lemmas in the case where the interaction potential has a bounded hessian. The next sections present the proofs of our main results: Theorem 3 in Section 5 and Theorem 4 in Section 6. The final Section presents a discussion on an improvement on the rate of convergence.


\section{Main results}
\subsection{Framework}
As in the introduction, $\dd m(x)= \frac{1}{Z}e^{-V(x)}\dd x$ is the probability measure on the position space $\RR^{Nd}$ and will be referred as the mean field measure later. Let $\dd \gamma(v)$ be the standard gaussian measure on the velocity space $\RR^{Nd}$, so $\dd\mu(x,v)= \dd m(x)\dd\gamma(v)$. 

Now we introduce our assumptions.
\begin{assumption}{}{(A1)}\label{Hyp0}{\it
  The functions $U$ and $W$ are twice continuously differentiable on $\RR^d$, $W$ is even (that is, $W(x)=W(-x)$ for all $x$), and
  \[
  Z=\int_{\RR^{Nd}} e^{-V(x)}\dd x < \infty, \quad \forall N\geq 2.
  \]
  i.e. $m$ is always assumed to be a probability measure.}
\end{assumption}

\begin{assumption}{}{(A2)}\label{Hyp1}{\it $\nabla^2W$ is bounded, i.e. there exists a positive constant $K$ such that
\[
-K I_{d}\leq  \nabla^2W\leq K I_{d}
\]
as quadratic forms on $\RR^d$, where $I_{d}$ is the identity matrix of size $d$.}
\end{assumption}

This assumption, which of course relaxes convexity, has been also considered in the propagation of chaos problem as well as the convergence of the (non kinetic) McKean-Vlasov equation in \cite{EGZ1,DEGZ}.

\begin{assumption}{}{UPI}\label{HypV1} {\it The measure $\dd m(x)= \frac1Z e^{-V(x)}\dd x$ satisfies a uniform Poincar\'e inequality i.e. there exists a positive real number $\kappa>0$ such that for any $N\geq 2$, and any compact-supported smooth function $h$ on $\RR^{Nd}$, it holds
     \begin{equation}\label{HypV1eq}
      \kappa\int \left(h-\int h\dd m\right)^2 \dd m\leq \int |\nabla_x h|^2 \dd m.
      \end{equation}}
\end{assumption}

The most easy-to-check criterion might be the Bakry-Emery curvature-dimension condition $CD(\kappa,\infty)$ (see for instance \cite{BGL}). It says that both Poincar\'e inequality and logarithmic Sobolev inequality (see \eqref{LSI} below) hold true for $\dd m(x)= \frac1Z e^{-V(x)}\dd x$ as soon as
\[
\nabla^2V(x)\geq \kappa I_{Nd}
\]
in the sense of quadratic forms on $\RR^{Nd}$. It can be verified if there exist constants $\kappa_1,\kappa_2$ such that
\begin{equation}
\nabla^2U\geq \kappa_1 I_{d}>0, \nabla^2W\geq \kappa_2I_{d}
\end{equation}
as quadratic forms on $\RR^{d}$, with $\kappa=\kappa_1-\kappa_2^{-}>0$ where $\kappa_2^{-}$ is the negative part of $\kappa_2$. Indeed, by Lemma \ref{LemHessW} below, the above inequalities imply that the contribution of the interaction potential $W$ in $\nabla^2V$ is bounded from below by $-\kappa_2^{-}I_{d}$, and the contribution of the confinement potential $U$ is bounded from below by $\kappa_1I_{d}$. Hence we have that $\nabla^2V\geq (\kappa_1-\kappa_2^{-})I_{Nd}$ as quadratic forms. It should be noted that $\kappa$ is then independent of the number $N$ of particles, i.e. we obtain a family of uniform functional inequalities for the mean field measure. Note that this strong convexity assumptions are the one employed in \cite{BGM10} for convergence in Wasserstein distance and by \cite{M17} for entropic convergence.

Other assumptions, more specified to the mean field measure $m$ for the uniform Poincar\'e inequalities and logarithmic Sobolev inequalities, can be found in another work \cite{GLWZ} of the authors. Indeed they proved these two functional inequalities with uniform (with respect to the number $N$ of particles) constants under various conditions on the confinement and interaction potentials, even when $U$ has two or more wells, and no convexity conditions on $W$. The methods used there depend on some dissipativity rate of the drift at distance $r>0$, defined by
\begin{equation}
  b_0(r)=\sup\limits_{x,y,z\in \RR^d:|x-y|=r}-\langle\frac{x-y}{|x-y|}, \nabla U(x)-\nabla U(y)+ \nabla W(x-z)-\nabla W(y-z)\rangle.
\end{equation}

\begin{thm}
Assume that the following Lipschitzian constant $c_{Lip,m}$ is finite
\begin{equation}
  c_{Lip,m}:= \frac{1}{4}\int_0^\infty \exp\left\{\frac14 \int_0^s b_0(u)\dd u\right\} s\dd s <\infty.
\end{equation}
Assume that there exists some constant $h> -1/c_{Lip,m}$ such that for any $(x_1,x_2,\cdots,x_N)\in \RR^{Nd}$,
\begin{equation}\label{UPIW}
\frac{1}{N}(-1_{i\neq j}\nabla^2W(x_i-x_j))_{1\leq i,j\leq N} \geq h I_{Nd}
\end{equation}
as quadratic forms. Then the mean field measure $m$ satisfies the following Poincar\'e inequality
\[
 (h+ 1/c_{Lip,m})\int \left(h-\int h\dd m\right)^2 \dd m\leq \int |\nabla_x h|^2 \dd m.
\]
for any function $h\in H^1(m)$.
\end{thm}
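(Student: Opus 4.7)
I would combine a uniform conditional Poincaré inequality for the one-particle conditional laws with the off-diagonal Hessian control~\eqref{UPIW}, assembling the two ingredients via the Witten--Helffer--Sjöstrand variational characterisation of the spectral gap of $m$.

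\textbf{Step~1 (uniform conditional Poincaré).} For each $1\le i\le N$ and each $x_{-i}=(x_l)_{l\neq i}\in\RR^{(N-1)d}$, the conditional law $\nu_{x_{-i}}$ of $x_i$ under $m$ given $x_{-i}$ has density $\propto\exp(-U(y)-\tfrac{1}{N}\sum_{l\neq i}W(y-x_l))$, and its Langevin drift
$b_{x_{-i}}(y)=-\nabla U(y)-\tfrac{1}{N}\sum_{l\neq i}\nabla W(y-x_l)$
is an empirical average of single-$W$ drifts $y\mapsto-\nabla U(y)-\nabla W(y-z)$, each satisfying, by the very definition of $b_0$, the one-sided Lipschitz estimate $-\langle(y-y')/|y-y'|,\,\cdot\,\rangle\leq b_0(|y-y'|)$. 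Taking convex combinations (the $\tfrac1N$ vs $\tfrac{N-1}{N}$ discrepancy produces only an $O(K/N)$ correction under~\eqref{Hyp1}, absorbed uniformly into~$b_0$) yields the same bound for $b_{x_{-i}}$ uniformly in $i,N,x_{-i}$. The concave function $\phi$ of Eberle built from $\phi'(r)\propto\exp(-\tfrac14\int_0^r b_0(u)du)$, normalised so that the rate in Eberle's reflection-coupling theorem matches exactly $c_{Lip,m}^{-1}$ as defined in the statement, then yields a $W_1$-contraction of the conditional semigroup $P^{x_{-i}}_t$ at rate $c_{Lip,m}^{-1}$. Via Kuwada's duality and the $L^2$-variance identity $\var_{\nu_{x_{-i}}}(\varphi)=2\int_0^\infty\int|\nabla P^{x_{-i}}_t\varphi|^2\,d\nu_{x_{-i}}\,dt$, this upgrades to the uniform conditional Poincaré inequality
$$\frac{1}{c_{Lip,m}}\,\var_{\nu_{x_{-i}}}(\varphi)\;\leq\;\int|\nabla\varphi|^2\,d\nu_{x_{-i}},\qquad\forall\varphi\in H^1(\nu_{x_{-i}}),$$
with constant independent of $i,N,x_{-i}$.

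\textbf{Step~2 (Helffer--Sjöstrand lift).} The Poincaré constant of $m$ admits the variational characterisation
$$\kappa(m)=\inf_{\omega\neq 0}\frac{\int\bigl(\sum_i|\nabla\omega_i|^2+\omega^{T}\nabla^2V\,\omega\bigr)\,dm}{\int|\omega|^2\,dm},$$
taken over smooth 1-forms $\omega=(\omega_1,\ldots,\omega_N):\RR^{Nd}\to(\RR^d)^N$ (Witten--Helffer--Sjöstrand). Decompose $\nabla^2V=M_{\mathrm{diag}}+M_{\mathrm{off}}$, where $M_{\mathrm{diag}}$ is block-diagonal with $i$-th block $\nabla^2U(x_i)+\tfrac{1}{N}\sum_{l\neq i}\nabla^2W(x_i-x_l)$ --- precisely $\nabla^2_{x_i}(-\log d\nu_{x_{-i}}/dy)$ --- and $M_{\mathrm{off}}=\tfrac{1}{N}(-\mathbf{1}_{i\neq j}\nabla^2W(x_i-x_j))_{ij}$ satisfies $\omega^{T}M_{\mathrm{off}}\omega\geq h|\omega|^2$ pointwise by~\eqref{UPIW}. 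Dropping the non-negative cross-derivative terms $\sum_{j\neq i}|\nabla_{x_j}\omega_i|^2$ in the 1-form Dirichlet form leaves
$$\int\bigl(\textstyle\sum_i|\nabla\omega_i|^2+\omega^{T}\nabla^2V\omega\bigr)dm \,\geq\, h\int|\omega|^2 dm + \sum_i\int\bigl(|\nabla_{x_i}\omega_i|^2+\omega_i^{T}M_{\mathrm{diag},ii}\omega_i\bigr)dm.$$
Conditioning on $x_{-i}$ and applying the Helffer--Sjöstrand identity for $\nu_{x_{-i}}$ to the $\RR^d$-valued 1-form $x_i\mapsto\omega_i(x_i,x_{-i})$ (whose bottom-of-spectrum equals the Poincaré constant of $\nu_{x_{-i}}$, which is $\geq c_{Lip,m}^{-1}$ by Step~1) bounds the $i$-th summand on the right by $c_{Lip,m}^{-1}\int|\omega_i|^2\,dm$. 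Summing in $i$ and adding the $h$ term yields the desired lower bound $(h+1/c_{Lip,m})\int|\omega|^2\,dm$, whence $\kappa(m)\geq h+1/c_{Lip,m}$.

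\textbf{Main obstacle.} The crux is Step~1: producing an $L^2$-Poincaré inequality with the sharp constant $1/c_{Lip,m}$ from the $b_0$-based Eberle reflection-coupling framework. The $W_1$-contraction at rate $1/c_{Lip,m}$ is by now standard, but its upgrade to the $L^2$-inequality with the same constant is delicate: one really needs the pointwise gradient commutation estimate $|\nabla P^{x_{-i}}_t\varphi|\leq e^{-t/c_{Lip,m}}P^{x_{-i}}_t|\nabla\varphi|$ read off from the concave-distance coupling (or an equivalent Kuwada-type computation), and this must be carried out uniformly over the conditioning $x_{-i}$ so that the resulting constant is genuinely $N$-independent. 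The careful accounting of the $\tfrac1N$ vs $\tfrac{N-1}{N}$ discrepancy, ensuring the $O(K/N)$ correction can be absorbed without degrading $c_{Lip,m}$, is the main quantitative subtlety.
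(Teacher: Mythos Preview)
The paper does not actually prove this theorem: it is quoted verbatim from the companion work \cite{GLWZ}, with no argument given here. So there is no ``paper's own proof'' to compare against in the present text; the result is imported as a black box to justify the hypothesis \ref{HypV1}.

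That said, your outline is coherent and matches the additive structure $h+1/c_{Lip,m}$ of the claimed spectral gap. Your Step~2 is correct: the Witten--Helffer--Sj\"ostrand identification of $\kappa(m)$ with the bottom of the $1$-form Witten Laplacian, followed by the block decomposition $\nabla^2V=M_{\mathrm{diag}}+M_{\mathrm{off}}$ and the pointwise lower bound~\eqref{UPIW} on $M_{\mathrm{off}}$, reduces the problem exactly to a uniform lower bound $1/c_{Lip,m}$ on the bottom of the \emph{conditional} $1$-form operator. Discarding the cross-derivatives $\sum_{j\neq i}|\nabla_{x_j}\omega_i|^2$ is legitimate since they are nonnegative, and the identification of $M_{\mathrm{diag},ii}$ with the conditional Hessian is right.

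Where you should be more careful is Step~1, and you have in fact flagged the right issue. Eberle's reflection coupling yields a contraction in a concave-modified $W_1$ distance, hence an $L^\infty\!\to\! L^\infty$ Lipschitz bound $\|\nabla P_t^{x_{-i}}\varphi\|_\infty\le e^{-t/c_{Lip,m}}\|\nabla\varphi\|_\infty$ (equivalently, a bound $c_{Lip,m}$ on the Lipschitz norm of the conditional Poisson operator). But the variance identity you invoke needs the \emph{pointwise} commutation $|\nabla P_t^{x_{-i}}\varphi|\le e^{-t/c_{Lip,m}}P_t^{x_{-i}}|\nabla\varphi|$, which the $W_1$ contraction alone does not give. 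The passage from the $L^\infty$ Lipschitz bound to the $L^2$ spectral bound with the \emph{same} constant is precisely what the approach in \cite{GLWZ} is designed to handle (via the self-adjointness of the $1$-form resolvent and an interpolation/duality argument, rather than via Kuwada's theorem), and this is where the name ``Lipschitzian constant'' for $c_{Lip,m}$ comes from. So your strategy is on the right track, but the mechanism you name for the upgrade in Step~1 is not quite the one that delivers the sharp constant; you would need to replace it by the argument of \cite{GLWZ}. The $\tfrac{1}{N}$ versus $\tfrac{N-1}{N}$ bookkeeping you mention is, by comparison, a minor point.
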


Recall that some nonnegative function $f\in L\log L(\mu)$, its entropy w.r.t. the probability measure $\mu$ is defined by
$$
\ent_\mu(f):= \int f\log f d\mu - \mu(f) \log \mu(f), \ \mu(f):=\int fd\mu.
$$

\begin{thm} Assume that
\begin{enumerate}
\item There exists a constant $\rho_{\rm LS,m}>0$ such that for all $i$ and $x^{\hat i}$, $m_i$, the conditional marginal distributions $m_i:=m_i(dx_i|x^{\hat i})$ of  $x_i\in\RR^d$ knowing $x^{\hat i}=(x_j)_{j\ne i}$, satisfies the log-Sobolev inequality :
    \begin{equation}\label{thm2-a1}
  \rho_{\rm LS,m}\ent_{m_i}(f^2)\le 2 \int |\nabla f|^2 {\rm d}m_i, \ f\in C^1_b(\RR^d).
\end{equation}
\item (a translation of Zegarlinski's condition)
\[
\gamma_0=c_{Lip,m}K<1.
\]
\end{enumerate}
then $m$ satisfies
\[
\rho_{\rm LS,m}(1-\gamma_0)^{2} \ent_{m} (f^2) \le 2 \int_{(\RR^d)^N} |\nabla f|^2 \dd m,\ f\in C^1_b((\RR^d)^N)
\]
i.e. the log-Sobolev constant of $m$ verifies
\[
\rho_{\rm LS}(m)\ge\rho_{\rm LS,m}(1-\gamma_0)^{2}.
\]
\end{thm}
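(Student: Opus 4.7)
The overall strategy is a Zegarlinski-type tensorization: transfer the conditional log-Sobolev inequality of hypothesis (1) into a global log-Sobolev inequality for $m$, paying a multiplicative price that is finite precisely because the Dobrushin-type smallness $\gamma_0<1$ holds. The first step is essentially automatic: applying hypothesis (1) pointwise in $x^{\hat i}$ and then integrating against the marginal of $m$ in the remaining variables yields, for each $i=1,\ldots,N$,
\[
\rho_{\mathrm{LS},m}\,\EE_m\bigl[\ent_{m_i}(f^2)\bigr] \le 2\int |\nabla_{x_i} f|^2\,\dd m,
\]
and summing over $i$,
\[
\rho_{\mathrm{LS},m}\,\sum_{i=1}^N \EE_m\bigl[\ent_{m_i}(f^2)\bigr] \le 2\int |\nabla f|^2\,\dd m. \qquad (*)
\]

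The heart of the argument (Step 2) is the tensorization-with-interaction
\[
\ent_m(f^2) \le \frac{1}{(1-\gamma_0)^2}\sum_{i=1}^N \EE_m\bigl[\ent_{m_i}(f^2)\bigr]. \qquad (\star)
\]
For a product measure, $(\star)$ is Han's inequality with constant $1$. In the interacting setting I would introduce the block Gibbs sampler whose generator resamples a coordinate $x_i$ from its conditional law $m_i(\cdot|x^{\hat i})$ and whose invariant measure is $m$; its entropy production is expressed through $\sum_i \EE_m[\ent_{m_i}(f^2)]$, controlled at each site by the conditional LSI. The cross-influence between sites is measured by a Dobrushin-type interdependence matrix encoding the sensitivity of $m_i(\cdot|x^{\hat i})$ to perturbations of $x^{\hat i}$: combining the Hessian bound $|\nabla^2 W|\le K$ with the Lipschitz constant $c_{\mathrm{Lip},m}$ (which quantifies how such perturbations propagate through the underlying position dynamics, as in the Poincar\'e theorem just stated) yields an operator-norm bound $\gamma_0=c_{\mathrm{Lip},m}K$ on this matrix. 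A Neumann-type resummation then produces the factor $(1-\gamma_0)^{-1}$; the squared power comes from the standard $f\leftrightarrow f^2$ passage in the log-Sobolev framework, equivalently from a Cauchy--Schwarz step inside the iterative Zegarlinski scheme.

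Combining $(*)$ with $(\star)$ gives $\rho_{\mathrm{LS},m}(1-\gamma_0)^2\,\ent_m(f^2)\le 2\int|\nabla f|^2\,\dd m$, which is the claimed inequality. The main obstacle is Step~2: obtaining the sharp exponent $2$ (rather than $1$) in $(1-\gamma_0)^2$ requires carefully tracking the dependence on $f$ versus $f^2$ through the recursion, and it is exactly here that hypothesis~(2) --- the translation of Zegarlinski's mixing condition $\gamma_0<1$ --- plays its role, ensuring that the geometric series driving the argument converges and yields a constant independent of $N$.
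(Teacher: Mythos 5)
First, a point of comparison: the paper itself contains no proof of this theorem --- it is quoted from the companion work \cite{GLWZ}, to which the reader is referred --- so there is no in-paper argument to match yours against line by line; what follows assesses your sketch on its own merits and against the Zegarlinski-type route used in that reference.

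Your Step 1 (conditioning, applying the conditional LSI, integrating and summing over $i$) is correct and standard. The genuine gap is Step 2: the approximate-tensorization inequality $(\star)$, namely $\ent_m(f^2)\le (1-\gamma_0)^{-2}\sum_{i}\EE_m\bigl[\ent_{m_i}(f^2)\bigr]$, carries the entire content of the theorem, and you assert it rather than prove it. Concretely: (i) you never establish the claimed bound $\gamma_0=c_{Lip,m}K$ on the interdependence matrix --- $c_{Lip,m}$ is defined through the dissipativity rate $b_0$ by an ODE/coupling-type estimate, and converting it into a quantitative Lipschitz (Wasserstein or gradient) estimate for the map $x^{\hat i}\mapsto m_i(\cdot\,|\,x^{\hat i})$, and then summing the $K/N$ pairwise contributions into a matrix/operator bound, is precisely the technical heart of the result and is absent; (ii) the exponent $2$ in $(1-\gamma_0)^{2}$ does not come for free from ``the standard $f\leftrightarrow f^2$ passage'': in the Zegarlinski--Stroock scheme it arises from carefully bounding terms such as $\bigl|\nabla_j(\EE_{m_i}f^2)^{1/2}\bigr|$ by averages of $|\nabla_j f|$ and $|\nabla_i f|$ with coefficients $\gamma_{ij}$ and then resumming the recursion, and sloppier bookkeeping yields worse constants or none at all; (iii) the block Gibbs-sampler framing risks circularity, since comparing $\ent_m(f^2)$ with $\sum_i\EE_m[\ent_{m_i}(f^2)]$ with an $N$-independent constant is essentially a (modified) log-Sobolev statement for that sampler, i.e.\ as hard as the theorem itself. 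Note finally that $(\star)$ is a strictly stronger intermediate claim than what is needed: the usual Zegarlinski argument (and the proof in \cite{GLWZ}) obtains the global LSI directly by iterating the conditional LSIs together with the gradient/Lipschitz bounds, without ever proving approximate tensorization of entropy, so a correct completion of your plan would still have to supply a proof of $(\star)$ under exactly these hypotheses, which is not immediate from standard references.
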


We remark that the assumptions can be verified in various settings for which we refer to \cite{GLWZ}. For instance, the uniform logarithmic Sobolev inequalities for the conditional marginal measure can be verified by the Bakry-\'Emery $\Gamma_2$-criterion and the bounded perturbation theorem.

We will provide later explicit conditions on $V$ and $W$ to get such a result.

\subsection{Villani's hypocoercivity theorem}
We shall present Villani's hypocoercivity theorem for kinetic Fokker-Planck equation concerning the convergence to equilibrium (c.f. \cite{Villani} Theorem 35, Theorem 18). In the sequel we shall adopt the semigroup formulation. Set
\begin{equation}
-L:= \Delta_v - v\cdot \nabla_v - v\cdot \nabla_x  +\nabla V(x) \cdot \nabla_v,
\end{equation}
then the kinetic Fokker-Planck equation can be rewritten as
\[
\partial_t h + Lh=0.
\]
The associated semigroup will be denoted as $e^{-tL}$ and a solution could be represented by
\[
h(t,x,v)=e^{-tL}h(0,\cdot,\cdot).
\]
We shall use the notation $|S|_{HS}^2:=\sum\limits_{i,j}|S_{ij}^2 h|^2$ for the square of the Hilbert-Schimidt norm of the square matrix $S=(S_{ij})$. For instance, $|\nabla^2_{xv}h|_{HS}^2:=\sum\limits_{i,j}|\partial_{x_iv_j}^2 h|^2$. And for a square matrix $S$, $|S|_{op}$ stands for its operator norm.
\medskip

Villani's Hypocoercivity theorem in $H^1(\mu)$ (see \cite[Theorem 35]{Villani}) states,

\begin{thm}\label{Thm-VillaniH1}
Let $V$ be a $C^2$ function on $\RR^{Nd}$, satisfying the condition \ref{HypV1}. Suppose that there exists a positive real number $M$ such that
 \begin{equation}\label{HypV2Eq}
    \int |\nabla_x^2V(x)\cdot \nabla_v h|^2\dd\mu \leq M\left(\int |\nabla_vh|^2\dd\mu + \int |\nabla^2_{xv}h|_{HS}^2\dd\mu\right).
  \end{equation}
  for any $h \in H^2(\mu)$. Then there are constants $C_0>0$ and $\lambda>0$, explicitly computable, such that for all $h_0\in H^1(\mu)$
\begin{equation}\label{VillaniH1}
||e^{-tL}h_0 - \int h_0 \dd\mu||_{H^1(\mu)}\leq C_0e^{-\lambda t}||h_0||_{H^1(\mu)}.
\end{equation}
\end{thm}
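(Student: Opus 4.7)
The plan is to follow Villani's twisted-norm strategy: construct a Lyapunov functional on $H^1(\mu)$ that is equivalent to the squared $H^1(\mu)$-norm (modulo constants) and dissipates exponentially along the semigroup $e^{-tL}$. Concretely, for suitably small constants $a,b,c>0$ with $b^2<ac$, define
\begin{equation*}
\mathcal{F}(h):=\|h-\mu(h)\|^{2}+a\|\nabla_{v}h\|^{2}+2b\langle \nabla_{x}h,\nabla_{v}h\rangle+c\|\nabla_{x}h\|^{2}.
\end{equation*}
The cross-term $2b\langle \nabla_x h,\nabla_v h\rangle$ is what breaks the degeneracy of $L$ in the $x$-direction; the norm equivalence $\|\cdot\|_{H^{1}(\mu)}^{2}\asymp\mathcal{F}$ on the hyperplane $\{\mu(h)=0\}$ will furnish the constant $C_{0}$.

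Next I would compute $\frac{d}{dt}\mathcal{F}(h_{t})$ along $\partial_{t}h=-Lh$ using the basic commutator identities (obtained in a few lines from the explicit form of $L$), namely $[\nabla_{v},L]=\nabla_{v}+\nabla_{x}$ and $[\nabla_{x},L]=-\nabla^{2}V\cdot\nabla_{v}$. Integration by parts against $\mu$ (using the invariance of $\mu$ and that the dual of $v\cdot\nabla_{x}-\nabla V\cdot\nabla_{v}$ is its negative in $L^{2}(\mu)$) yields
\begin{equation*}
-\tfrac{1}{2}\tfrac{d}{dt}\mathcal{F}(h_{t})=\|\nabla_{v}h\|^{2}+a\|\nabla_{v}^{2}h\|_{HS}^{2}+2b\|\nabla_{xv}^{2}h\|_{HS}^{2}+c\|\nabla_{x}h\|^{2}-b\|\nabla_{v}h\|^{2}+R,
\end{equation*}
where $R$ collects the commutator remainders, the most delicate of which are the $x$-gradient drift $b\langle \nabla_{v}h,\nabla_{v}h+\nabla_{x}h\rangle$ type contributions and, crucially, $-c\langle \nabla_{x}h,\nabla^{2}V\cdot\nabla_{v}h\rangle-b\langle \nabla_{v}h,\nabla^{2}V\cdot\nabla_{v}h\rangle$.

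The main obstacle is closing this identity into a coercive lower bound $-\frac{1}{2}\frac{d}{dt}\mathcal{F}(h_{t})\geq \lambda_{0}\mathcal{F}(h_{t})$. This is precisely where the two hypotheses enter. For the bad term involving $\nabla^{2}V$, I would apply Young's inequality together with assumption (\ref{HypV2Eq}):
\begin{equation*}
2c|\langle \nabla_{x}h,\nabla^{2}V\cdot\nabla_{v}h\rangle|\leq \varepsilon\|\nabla_{x}h\|^{2}+\tfrac{c^{2}M}{\varepsilon}\bigl(\|\nabla_{v}h\|^{2}+\|\nabla_{xv}^{2}h\|_{HS}^{2}\bigr),
\end{equation*}
and analogously absorb the $b$-term into $\|\nabla_{v}h\|^{2}$ and $\|\nabla_{xv}^{2}h\|_{HS}^{2}$. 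Choosing $b\ll c\ll a\ll 1$ (all ultimately functions of $\kappa$ and $M$), the $a\|\nabla_{v}^{2}h\|_{HS}^{2}$ and $2b\|\nabla_{xv}^{2}h\|_{HS}^{2}$ reservoirs dominate the Young remainders, leaving
\begin{equation*}
-\tfrac{1}{2}\tfrac{d}{dt}\mathcal{F}(h_{t})\gtrsim \|\nabla_{v}h\|^{2}+\|\nabla_{x}h\|^{2}.
\end{equation*}
To control $\|h-\mu(h)\|^{2}$, I would apply the tensorised Poincaré inequality for $\mu=m\otimes\gamma$: the hypothesis \ref{HypV1} on $m$ together with the Gaussian Poincaré inequality for $\gamma$ yields $\|h-\mu(h)\|^{2}\leq \min(\kappa,1)^{-1}(\|\nabla_{x}h\|^{2}+\|\nabla_{v}h\|^{2})$.

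Combining these bounds gives $-\frac{d}{dt}\mathcal{F}(h_{t})\geq 2\lambda\,\mathcal{F}(h_{t})$ for some explicit $\lambda>0$ depending only on $\kappa$ and $M$; Gronwall yields $\mathcal{F}(h_{t})\leq e^{-2\lambda t}\mathcal{F}(h_{0})$. Since $\mu$ is invariant, $\mu(h_{t})=\mu(h_{0})$, and converting back via the norm equivalence $\mathcal{F}\asymp\|\cdot-\mu(\cdot)\|_{H^{1}(\mu)}^{2}$ produces the desired inequality \eqref{VillaniH1} with computable $C_{0}$ and $\lambda$. The expected sharpest difficulty is book-keeping the smallness conditions on $(a,b,c)$: they must simultaneously preserve $b^{2}<ac$ (equivalence of norms), absorb the $\nabla^{2}V$ term through (\ref{HypV2Eq}), and leave room for the Poincaré step to produce a strictly positive $\lambda$ that depends only on $\kappa$ and $M$ (and not, for instance, on $N$, once the same is true of $\kappa$ and $M$).
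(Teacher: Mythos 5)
Your strategy is exactly the one the paper follows (Villani's twisted $H^1$ norm with the cross term $2b\langle\nabla_x h,\nabla_v h\rangle$, coercivity closed by the boundedness hypothesis \eqref{HypV2Eq}, the Poincar\'e inequality for the zeroth-order part, Gronwall, and norm equivalence to recover $C_0$), so there is no divergence of method. However, the closing step as you wrote it would fail, because your dissipation identity mis-assigns the coercive terms. Differentiating the functional term by term (compare \eqref{CoerDecomp}) gives: the $a\|\nabla_v h\|^2$ piece produces $a(\|\nabla_v^2h\|^2+\|\nabla_v h\|^2+\langle\nabla_v h,\nabla_x h\rangle)$; the cross term produces $b(2\langle\nabla_v^2h,\nabla^2_{xv}h\rangle+\langle\nabla_v h,\nabla_x h\rangle+\|\nabla_x h\|^2-\langle\nabla_v h,\nabla^2_xV\cdot\nabla_v h\rangle)$; and the $c\|\nabla_x h\|^2$ piece produces $c(\|\nabla^2_{xv}h\|^2-\langle\nabla_x h,\nabla^2_xV\cdot\nabla_v h\rangle)$. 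Thus the \emph{only} positive $\|\nabla_x h\|^2$ term carries the coefficient $b$ (not $c$), the \emph{only} $\|\nabla^2_{xv}h\|^2$ reservoir carries the coefficient $c$ (not $2b$), and $2b\langle\nabla_v^2h,\nabla^2_{xv}h\rangle$ is a cross term to be absorbed (forcing $b^2\lesssim ac$), not a square.

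Because of this, your proposed hierarchy $b\ll c\ll a$ is the wrong way round. Absorbing the worst term $c|\langle\nabla_x h,\nabla^2_xV\cdot\nabla_v h\rangle|\le c\sqrt{M}\,\|\nabla_x h\|\bigl(\|\nabla_v h\|+\|\nabla^2_{xv}h\|\bigr)$ by Young's inequality must use the reservoirs $b\|\nabla_x h\|^2$ and $c\|\nabla^2_{xv}h\|^2$, which forces roughly $c^2M\lesssim bc$, i.e.\ $c\lesssim b/M$; similarly the $b$-term forces $b^2M\lesssim c$ up to constants. So one needs $c\ll b\ll a$, and the paper's Section 3 makes this explicit with $a=\tfrac{1}{25M}$, $b=\tfrac{1}{200M^2}$, $c=\tfrac{1}{800M^3}$, $\lambda_0=\tfrac{1}{440M^2}$ (equivalently, positivity of the matrix $T$ in \eqref{EsCoMatrixT} acting on $Z=(\|\nabla_v h\|,\|\nabla_v^2h\|,\|\nabla_x h\|,\|\nabla^2_{xv}h\|)$, which is a compact way of doing all your Young inequalities at once). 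With your ordering $b\ll c$ the absorption fails for $M\ge 1$. Once the hierarchy is corrected, the rest of your argument (tensorised Poincar\'e giving the factor $\min\{\tfrac{1}{2a+1},\tfrac{\kappa}{2c\kappa+1}\}$ as in \eqref{EqLambda}, Gronwall, and equivalence of norms since $b^2<ac$) matches the paper and yields $N$-independent constants whenever $\kappa$ and $M$ are $N$-independent.
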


The idea in Villani's proof of Theorem \ref{Thm-VillaniH1} is as follows: if one could find a Hilbert space such that the operator $L$ is coercive with respect to its norm, then one has exponential convergence for the semigroup $e^{-tL}$ under such a norm; If, in addition, this norm is equivalent to some usual norm (such as $H^1(\mu)$-norm), then one obtains exponential convergence under the usual norm as well.

We shall refer to the condition \eqref{HypV2Eq} as the boundedness condition \eqref{HypV2Eq} on $\nabla^2V$. In his statement of \cite[Theorem 35]{Villani}, this boundedness condition is verified by $|\nabla_x^2 V|\leq C(1+|\nabla V|)$ with a constant $M$ depending unfortunately on the dimension.

In the setting with mean field interaction, the constants $C_0$ and $\lambda$ given in \cite{Villani} depend on the number $N$ of particles, through the dependence of $M$ (in \eqref{HypV2Eq}) on $N$. In fact, by a careful analysis of the study in \cite{Villani}, we are led to the following observation: in \cite[Theorem 35, Lemma A.24]{Villani}, as $N\rightarrow \infty$, $\lambda$ decays faster than $N^{-2}$, while $C_0$ grows faster than $N^{3/2}$. We will give conditions under which we may bypass this dependence in the number of particles.

\subsection{Main results}

We have two different assumptions on the interaction potential ensuring an $H^1$ convergence to equilibrium. The first one is quite strong, namely that $W$ is a Lipschitzian function but we only assume a uniform Poincar\'e inequality (UPI).

\subsubsection{case {\bf UPI} and $|\nabla W|$ bounded}

\begin{thm}\label{Thm1}Assume \ref{Hyp0},\ref{Hyp1} and the condition \ref{HypV1}.
Suppose furthermore that $|\nabla W|\leq K'$ and the following Lyapunov condition holds
\begin{equation}\label{Thm1EqLy}
 |\nabla^2U|_{op} \leq K_1|\nabla U|+ K_2
\end{equation}
for some positive constants $K', K_1,K_2$. Then there exist explicitly computable constants $C_0$ and $\lambda$, independent of the number $N$ of the particles, such that
\begin{equation}
||e^{-tL}h_0 - \int h_0 \dd\mu||_{H^1(\mu)}\leq C_0e^{-\lambda t}||h_0||_{H^1(\mu)}
\end{equation}
for all $h_0\in H^1(\mu)$.
\end{thm}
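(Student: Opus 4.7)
The plan is to verify the hypotheses of Villani's hypocoercivity theorem (Theorem \ref{Thm-VillaniH1}) on the $N$-particle potential $V$ with constants independent of $N$. The uniform Poincar\'e inequality \ref{HypV1} is given, so the only task is to establish the boundedness condition \eqref{HypV2Eq} with $M$ independent of $N$; the $N$-independence of the exponential rate and prefactor in Theorem \ref{Thm1} will then follow from the explicit dependence of $C_0,\lambda$ on $(\kappa,M)$ in Villani's theorem.

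First I would split $V=V_U+V_W$, where $V_U(x)=\sum_i U(x_i)$ and $V_W(x)=\frac{1}{2N}\sum_{i,j}W(x_i-x_j)$, so that $\nabla^2_x V=\nabla^2_x V_U+\nabla^2_x V_W$. The contribution of $V_W$ is handled easily by Assumption \ref{Hyp1}: a direct computation of the block structure (the $k$-th diagonal block equals $\frac{1}{N}\sum_j\nabla^2W(x_k-x_j)$, the off-diagonal $(k,l)$-block equals $-\frac{1}{N}\nabla^2W(x_k-x_l)$) together with the uniform bound $|\nabla^2 W|_{\mathrm{op}}\le K$ and Cauchy--Schwarz gives, pointwise,
\begin{equation*}
|\nabla^2_x V_W(x)\,\nabla_v h|^2\le C_W\,|\nabla_v h|^2
\end{equation*}
with $C_W$ depending only on $K$. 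The key point is that the $1/N$ prefactor cancels the $N$-fold sum, so no dimensional factor appears.

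The core of the argument is the term $\int|\nabla^2_x V_U\cdot\nabla_v h|^2\,d\mu=\sum_k\int|\nabla^2 U(x_k)\nabla_{v_k}h|^2\,d\mu$, where $\nabla^2 U$ is unbounded. Here I would use the Lyapunov hypothesis \eqref{Thm1EqLy} and the boundedness of $|\nabla W|$: set $L_0:=\Delta_x-\nabla_x V\cdot\nabla_x$, the generator of the marginal equilibrium $m$. A direct computation gives
\begin{equation*}
-L_0 U(x_k)=-\Delta U(x_k)+|\nabla U(x_k)|^2+\frac{1}{N}\sum_{j}\nabla W(x_k-x_j)\cdot\nabla U(x_k),
\end{equation*}
and using $|\Delta U|\le d\,|\nabla^2 U|_{\mathrm{op}}\le dK_1|\nabla U|+dK_2$ together with $|\nabla W|\le K'$, Young's inequality yields
\begin{equation*}
|\nabla U(x_k)|^2\;\le\;-2L_0 U(x_k)+C,
\end{equation*}
where $C=C(d,K_1,K_2,K')$ is \emph{independent of $N$}. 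This is the place where both the Lyapunov condition and the $|\nabla W|$-bound are essential, and I expect this to be the main conceptual obstacle: one has to ensure that the mean-field drift term $\frac1N\sum_j\nabla W(x_k-x_j)$ does not pollute the estimate with any $N$-dependent quantity, which is exactly what the uniform bound $K'$ secures.

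Once this pointwise Lyapunov inequality is in hand, I would multiply by $|\nabla_{v_k}h|^2$, integrate against $\mu=m\otimes\gamma$ and use the integration-by-parts formula
\begin{equation*}
\int(-L_0 U(x_k))\,|\nabla_{v_k}h|^2\,d\mu=\int\nabla U(x_k)\cdot\nabla_{x_k}|\nabla_{v_k}h|^2\,d\mu,
\end{equation*}
followed by Cauchy--Schwarz and Young's inequality with a small parameter $\eta>0$, so as to absorb the resulting $\int|\nabla U(x_k)|^2|\nabla_{v_k}h|^2\,d\mu$ term on the left-hand side. This yields
\begin{equation*}
\int|\nabla U(x_k)|^2|\nabla_{v_k}h|^2\,d\mu\;\le\;C_1\int|\nabla^2_{x_kv_k}h|_{HS}^2\,d\mu+C_2\int|\nabla_{v_k}h|^2\,d\mu,
\end{equation*}
with $C_1,C_2$ independent of $N$. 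Combined with $|\nabla^2 U(x_k)|_{\mathrm{op}}^2\le 2K_1^2|\nabla U(x_k)|^2+2K_2^2$ and summation over $k$, this gives the bound on $\int|\nabla^2_xV_U\nabla_vh|^2\,d\mu$ needed in \eqref{HypV2Eq}. Adding the $W$-contribution completes the verification of \eqref{HypV2Eq} with $M$ independent of $N$, and Theorem \ref{Thm-VillaniH1} then delivers the announced exponential decay in $H^1(\mu)$ with $N$-independent constants.
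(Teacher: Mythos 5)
Your proposal is correct and follows the same overall strategy as the paper: reduce Theorem \ref{Thm1} to verifying the boundedness condition \eqref{HypV2Eq} with a constant $M$ independent of $N$, split $\nabla^2_x V$ into the block-diagonal confinement part $H_U$ and the interaction part $H_W$ (whose operator norm is bounded by a constant depending only on $K$, cf.\ Lemma \ref{LemHessW}), and then invoke Theorem \ref{Thm-VillaniH1} together with \ref{HypV1}, the rate and prefactor depending only on $(\kappa,M)$. Where you genuinely differ is in the proof of the key estimate on $\int|\nabla^2U(x_k)|_{op}^2|\nabla_{v_k}h|^2\,\dd\mu$: the paper (Lemmas \ref{LemLy2} and \ref{PropLy2}) uses the exponential Lyapunov function $S=e^{\alpha U(x_i)/2}$ and the inequality $\int-\frac{\H S}{S}\,g^2\,\dd m\leq\int|\nabla g|^2\,\dd m$, applied with $g=\partial_{v_{il}}h$ and summed (Lemma \ref{LemVU}), whereas you integrate the pointwise bound $|\nabla U(x_k)|^2\leq -2L_0U(x_k)+C$ directly against $|\nabla_{v_k}h|^2$, integrate by parts and absorb the term $\eta\int|\nabla U(x_k)|^2|\nabla_{v_k}h|^2\,\dd\mu$ back into the left-hand side; your computation of $-L_0U(x_k)$ is exactly the paper's computation of $\H S/S$ with the exponential stripped off, and your constants are indeed $N$-independent, the bound $|\nabla W|\leq K'$ and the $1/N$ prefactor playing precisely the roles you identify. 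One caveat: the absorption step is legitimate only once you know a priori that $\int|\nabla U(x_k)|^2|\nabla_{v_k}h|^2\,\dd\mu<\infty$, which is not automatic for a general $h\in H^2(\mu)$; you should run the estimate on a dense class of nice functions (or with a truncation of $U$) and pass to the limit, whereas the paper's formulation through Lemma \ref{LemLy2} sidesteps this because nothing has to be reabsorbed \textemdash{} the quadratic term $|\nabla\phi|^2$ generated by $S=e^{\phi}$ is exactly what makes the inequality \eqref{HypV2Eq*} come out closed for every $g\in H^1(m)$. In exchange, your route is shorter and more transparent, at the price of slightly different explicit constants and this extra density argument.
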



\subsubsection{case Uniform Logarithmic Sobolev Inequality and {\bf (A2)}}$~$\\
In the next theorem, we shall release the boundedness assumption on $\nabla W$, but reinforce the condition \ref{HypV1} as
\begin{assumption}{}{ULSI}\label{Hyp3}{\it
  The mean field measure $m$ satisfies a uniform log-Sobolev inequality with a constant $C_{LS}>0$, i.e. for all $N\geq 2$ and for all smooth compactly-supported function $g$ on $\RR^{Nd}$, it holds
\begin{equation}\label{LSI}
\ent_{m}(g^2):=\int g^2\log g^2 \dd m -\int g^2\dd m \log\left(\int g^2\dd m\right)\leq 2C_{LS} \int |\nabla g|^2\dd m.
\end{equation}}
\end{assumption}
In \cite{GLWZ} practical conditions are given to ensure such a condition, see example below.

\begin{thm}\label{Thm2}Assume \ref{Hyp0},\ref{Hyp1} and the condition \ref{Hyp3}. Suppose furthermore that the Lyapunov condition \eqref{Thm1EqLy} holds for some positive constants $K_1$ and $K_2$. Then there exist explicitly computable constants $C_0$ and $\lambda$, independent of the number $N$ of the particles, such that
\begin{equation}
||e^{-tL}h_0 - \int h_0 \dd\mu||_{H^1(\mu)}\leq C_0e^{-\lambda t}||h_0||_{H^1(\mu)}
\end{equation}
for all $h_0\in H^1(\mu)$.
\end{thm}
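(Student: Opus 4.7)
The plan is to invoke Villani's hypocoercivity theorem (Theorem~\ref{Thm-VillaniH1}) with constants uniform in $N$. The ULSI assumption \ref{Hyp3} directly supplies the required uniform Poincar\'e inequality (with $\kappa = 1/C_{LS}$), so the sole difficulty is to verify the boundedness condition \eqref{HypV2Eq} with $M$ independent of $N$. In contrast with Theorem~\ref{Thm1}, $|\nabla W|$ is no longer assumed bounded, and moments like $|x_i - x_l|^2$ must now be controlled via the sub-Gaussian concentration furnished by ULSI.

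I would first decompose $\nabla_x^2 V = D + H$, where $D$ is block-diagonal with $i$-th block $\nabla^2 U(x_i)$ and $H$ gathers all $W$-contributions: $H_{ii} = \frac{1}{N}\sum_l \nabla^2 W(x_i - x_l)$ and $H_{ij} = -\frac{1}{N}\nabla^2 W(x_i - x_j)$ for $i\ne j$. Using the evenness of $W$ (hence $\nabla^2 W(-y) = \nabla^2 W(y)$) and a symmetrization in $(i,j)$, one gets
$$
\langle Hu, u\rangle = \frac{1}{N}\sum_i u_i^T \nabla^2 W(0) u_i + \frac{1}{2N}\sum_{i\neq j}(u_i - u_j)^T\nabla^2 W(x_i - x_j)(u_i - u_j),
$$
and combined with Assumption~\ref{Hyp1} together with $\sum_{i,j}|u_i - u_j|^2 \leq 2N|u|^2$ this yields $|H|_{op} \leq 2K$, uniformly in $N$ --- this is essentially the Section~4 lemma alluded to in the introduction. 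Hence $\int |H\cdot\nabla_v h|^2 d\mu \leq 4K^2 \int|\nabla_v h|^2 d\mu$, and the Lyapunov condition \eqref{Thm1EqLy} reduces the $D$-contribution to a uniform-in-$N$ bound on $\sum_i\int|\nabla U(x_i)|^2(\partial_{v_i}h)^2 d\mu$.

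For each fixed $i$, integration by parts in $x_i$ against $dm$ (using $\nabla_{x_i}V = \nabla U(x_i) + \frac{1}{N}\sum_l\nabla W(x_i - x_l)$) with test function $(\partial_{v_i}h)^2$ produces the identity
$$
\int|\nabla U(x_i)|^2(\partial_{v_i}h)^2 d\mu = \int\Delta U(x_i)(\partial_{v_i}h)^2 d\mu + 2\int(\partial_{v_i}h)\nabla U(x_i)\cdot\nabla_{x_i}\partial_{v_i}h\,d\mu - \frac{1}{N}\int\nabla U(x_i)\cdot\sum_l\nabla W(x_i-x_l)(\partial_{v_i}h)^2 d\mu.
$$
Young's inequality combined with $|\Delta U| \leq d(K_1|\nabla U| + K_2)$ absorbs a small multiple of $|\nabla U|^2(\partial_{v_i}h)^2$; the cross-derivative term yields $\int|\nabla^2_{x_iv_i}h|^2 d\mu$, which after summing in $i$ contributes to $\int|\nabla^2_{xv}h|_{HS}^2 d\mu$. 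Using $\nabla W(0) = 0$ and the $K$-Lipschitzness of $\nabla W$ provided by \ref{Hyp1}, the remaining $W$-cross-term is bounded by a Young-absorbable piece plus $\frac{C(K)}{N}\sum_l \int|x_i - x_l|^2(\partial_{v_i}h)^2 d\mu$.

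The main obstacle --- and the reason ULSI rather than UPI is required here --- is closing this last moment estimate. By \ref{Hyp3}, $\ent_m((\partial_{v_i}h)^2)\leq 2C_{LS}\int|\nabla_x\partial_{v_i}h|^2 dm$, and Herbst's lemma applied to the $1$-Lipschitz observables $|x_i|$ yields a uniform-in-$N$ bound $\int e^{s|x_i|^2}\,dm < \infty$ for $s < 1/(2C_{LS})$. Combining these via the variational form $\int F\phi\,dm \leq \frac{1}{s}\ent_m(\phi) + \frac{\mu(\phi)}{s}\log\int e^{sF}\,dm$ with $F = |x_i|^2$ (and analogously for $|x_l|^2$) gives $\int|x_i - x_l|^2(\partial_{v_i}h)^2 d\mu \leq C_1\int(\partial_{v_i}h)^2 d\mu + C_2\int|\nabla_x\partial_{v_i}h|^2 d\mu$ with constants independent of $N$ and $l$. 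Crucially, the $\frac{1}{N}$ mean-field prefactor balances the sum over $l$ exactly. Assembling all pieces yields \eqref{HypV2Eq} with $M$ uniform in $N$, and Theorem~\ref{Thm-VillaniH1} then delivers the announced $H^1(\mu)$-decay with explicit $N$-independent $C_0$ and $\lambda$.
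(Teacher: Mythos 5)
Your overall architecture matches the paper's: reduce everything to the boundedness condition \eqref{HypV2Eq} with an $N$-independent constant $M$, bound the $W$-block of $\nabla_x^2V$ by the symmetrization in $(i,j)$ (your constant $2K$ versus the paper's $K$ in Lemma \ref{LemHessW}, and note the true Hessian's diagonal blocks do not contain a $\nabla^2W(0)$ term --- harmless bookkeeping slips), handle the weight $|\nabla U(x_i)|^2$ by integration by parts against $\dd m$ with test function $(\partial_{v_i}h)^2$ (this is essentially the paper's Lemma \ref{LemLy2} applied with $S=e^{\alpha U(x_i)/2}$, so that step is sound), and control the remaining mean-field term $\frac1N\sum_l\int|x_i-x_l|^2(\partial_{v_i}h)^2\dd\mu$ by the entropy variational inequality plus exponential integrability under \ref{Hyp3}.

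The genuine gap is in that last step. You assert that ``Herbst's lemma applied to the $1$-Lipschitz observable $|x_i|$ yields a uniform-in-$N$ bound $\int e^{s|x_i|^2}\dd m<\infty$ for $s<1/(2C_{LS})$.'' Herbst-type concentration under a log-Sobolev inequality controls deviations of $|x_i|$ from its \emph{mean}, so what it actually delivers is a bound of the form $\int e^{s|x_i|^2}\dd m\le C\bigl(s,C_{LS},\int|x_i|\dd m\bigr)$, and the first moment $\int|x_1|\dd m$ (which depends on $N$ through $m=m_N$) is not controlled uniformly in $N$ by \ref{Hyp0}, \ref{Hyp1}, \ref{Hyp3} or \eqref{Thm1EqLy}: the functional inequality bounds variances (e.g. $\var_m(x_{1k})\le C_{LS}$) but says nothing about where the marginal of $m_N$ is centered. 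So, as written, the uniformity in $N$ of your exponential moment, hence of $M$, hence of $C_0$ and $\lambda$, is unproven. The paper's Lemma \ref{lemLy3LSI} avoids exactly this: it never touches absolute positions, but bounds $\int e^{\tau|x_1-x_2|^2}\dd m$ via $e^{\tau|z|^2}=\int e^{\sqrt{2\tau}z\cdot y}\dd\gamma_1(y)$ and the observation that the linear functional $\sqrt{2\tau}(x_1-x_2)\cdot y$ has mean zero under $m$ by exchangeability (this is where the evenness of $W$ and the symmetry of $m$ are used), so the Lipschitz exponential bound $e^{2\tau C_{LS}|y|^2}$ carries no centering term and the final constant $(1-4\tau C_{LS})^{-d/2}$ depends only on $C_{LS}$, $\tau$ and $d$. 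Your argument is repaired by applying this centered estimate directly to $|x_i-x_l|^2$ (or simply quoting Lemma \ref{lemLy3LSI}) rather than splitting it as $2|x_i|^2+2|x_l|^2$.
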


We relax in this theorem the strong assumption concerning the boundedness of $|\nabla W|$ but we reinforce the functional inequality needed to ensure the adimensional result.

\subsection{Examples}

\subsubsection{UPI and Theorem 3}

Let assume the following convexity at infinity assumptions on $U$: there exists constants $c_U$, $c$ and $R\ge0$ such that
\begin{equation}\label{convinfi}
\langle \nabla U(x)-\nabla U(y),x-y\rangle\ge c_U|x-y|^2-c|x-y|1_{|x-y|\le R}.
\end{equation}
By following \cite[Cor. 5, Rem. 4]{GLWZ}, then assuming {\bf (A2)}, if we suppose moreover
$$(c_U-K)e^{-cR/4}-2K>0,$$
then {\bf UPI} holds. The Lyapunov condition \eqref{Thm1EqLy}, expressing that $U$ cannot grow too fast (more than exponentially) and the boundedness condition of $|\nabla W|$ are easy to verify.

\subsubsection{ULSI and Theorem 4}
For simplicity, we will suppose that $U$ is super convex at infinity, i.e. for any $\tilde K>0$ there exists $R>0$ such that
$$\nabla^2U\ge \tilde K \,I,\qquad \forall |x|\ge R.$$
Note that it implies \eqref{convinfi}. Suppose also
$$\frac{e^{cR/4}}{(c_U-K)}K<1$$
where $c$ and $C_U$ are described in \eqref{convinfi}, then a ULSI holds and once again the Lyapunov condition can be easily verified on examples.

\section{Villani's hypocoercivity theorem}
This section is devoted to Villani's hypocoercivity theorem. The following outline of the proof of \cite[Theorem 35]{Villani} further details the use of the condition \ref{HypV1} and the boundedness condition \eqref{HypV2Eq},
\begin{enumerate}
          \item Introduce an inner product $((\cdot,\cdot))$ in the form of
          \begin{equation}\label{NewNorm}
          ((h,h)) = ||h||^2 + a||\nabla_v h||^2 + 2b\langle \nabla_vh,\nabla_xh\rangle + c||\nabla_x h||^2
          \end{equation}
          where  the coefficients $a,b,c$ will be specified later such that 
          \begin{equation}\label{EqvNorm}
          c_1||h||_{H^1(\mu)}\leq ((h,h))^{1/2}\leq c_2||h||_{H^1(\mu)},\quad \forall h\in H^1(\mu)
          \end{equation}
          for some constants $c_1>0, c_2>0$.
          \item Prove a coercivity estimate for $L$ under the new inner product. Thanks to the boundedness condition \eqref{HypV2Eq}, one can choose appropriately the constants $a, b$ and $c$ such that
          \begin{equation}\label{EstmCoerL1}
          ((h,Lh))\geq \lambda_0(||\nabla_x h||^2+ ||\nabla_v h||^2), \quad \mbox{if} \int h\dd\mu=0 
          \end{equation}
          for some constant $\lambda_0>0 $ depending only on the constant $M$. By the tensorization property of Poincar\'e inequality, the condition \ref{HypV1} implies that
          \[
          ((h,h))\leq (2a+1)||\nabla_v h||^2+ (2c+\kappa^{-1})||\nabla_x h||^2
          \]
          for all function $h\in H^1(\mu)$ with $\int h\dd\mu=0$, and hence
          \begin{equation}\label{EstmCoerL2}
          ((h,Lh))\geq \lambda ((h,h)), \quad \quad \mbox{if} \int h\dd\mu=0
          \end{equation}
           where $\lambda$ can be given by
           \begin{equation}\label{EqLambda}
           \lambda=\lambda_0\min\left\{\frac{1}{2a+1}, \frac{\kappa}{2c\kappa+1}\right\}.
           \end{equation}
          \item Apply Gronwall's lemma and deduce exponential decay in the new inner product,
          \[
          ((e^{-tL}h,e^{-tL}h))\leq e^{-2\lambda t}((h,h)), \quad \quad \mbox{if} \int h\dd\mu=0
          \]
          which, due to the equivalence of the two inner products, implies exponential decay in $H^1(\mu)$-norm
          \[
          ||e^{-tL}h-\int h\dd \mu||_{H^1(\mu)}\leq \frac{c_2}{c_1}e^{-\lambda t}||h-\int h\dd\mu||_{H^1(\mu)}
          \]
         and so the theorem follows by taking $C_0=c_2/c_1$.
        \end{enumerate}

In the coercivity estimate \eqref{EstmCoerL2}, a vital technical point is the introduction of the mixed term $\langle \nabla_x h,\nabla_v h\rangle$. And one has to bound the terms involving  $\nabla^2_x V$ since it appears naturally in the computations. To see this, recall the following expression taken from \cite{Villani},

\begin{eqnarray}\label{CoerDecomp}
  ((h,Lh)) &=& ||\nabla_v h||^2 + a(||\nabla_v^2h||^2 + ||\nabla_vh||^2+\langle \nabla_vh,\nabla_x h\rangle)  \notag\\
  & & +b(2 \langle \nabla^2_vh,\nabla^2_{xv}h\rangle + \langle \nabla_vh,\nabla_x h\rangle+||\nabla_x h||^2 - \langle \nabla_vh,  \nabla_x^2V\cdot\nabla_v h\rangle) \notag\\
  & & +c(||\nabla^2_{xv}h||^2- \langle\nabla_xh,\nabla^2_xV\cdot\nabla_vh\rangle).
\end{eqnarray}

It is then clear that, without the mixed term $\langle \nabla_x h,\nabla_v h\rangle$ (i.e. let $b=0$), there would be no dissipation in the $\nabla_x$ direction, and so it would be impossible to get a coercivity estimate. That way, the inner products $((\cdot,\cdot))$ and $\langle\cdot,\cdot\rangle_{H^1(\mu)}$, though being equivalent, are quite different in coercivity. And we see that the mixed term really helps to get coercivity.

As the computation \eqref{CoerDecomp} shows, in order to obtain a coercivity estimate in the form of \eqref{EstmCoerL1} or \eqref{EstmCoerL2}, we need to bound the terms involving $\nabla_x^2V(x)\cdot\nabla_vh$ which occur in $((h,Lh))$, namely,  $-\langle \nabla_vh,\nabla_x^2 V(x)\cdot \nabla_v h\rangle$ and $-\langle \nabla_xh,\nabla_x^2 V(x)\cdot \nabla_v h\rangle$, in terms of the $L^2$-norm of $\nabla_vh$, $\nabla_v^2 h$, $\nabla_x h$, and $\nabla_{xv}^2h$. And it then becomes natural to consider boundedness conditions in the form of (\ref{HypV2Eq}).

Moreover, assuming the condition \eqref{HypV2Eq} holds with a constant $M$, by Cauchy-Schwartz inequality, we have
\[
((h,Lh))\geq \langle Z, TZ\rangle
\]
with the vector $Z=(||\nabla_vh||,||\nabla^2_{v}h||,  ||\nabla_xh||, ||\nabla^2_{xv}h||) \in \RR^4$ and the symmetric $4\times 4$ matrix $T$ given by
\begin{equation}\label{EsCoMatrixT}
T=\begin{pmatrix}
1+a-b\sqrt{M}& 0 & -(a+b+c\sqrt{M} )/2& -b\sqrt{M}/2\\
0 & a          & 0            & -b\\
-( a+b+c\sqrt{M})/2 & 0          & b            & -c\sqrt{M}/2\\
-b\sqrt{M}/2 & -b         & -c\sqrt{M}/2           &c
\end{pmatrix}.
\end{equation}
To ensure the coercivity estimate \eqref{EstmCoerL1}, it suffices to choose $a,b,c$ such that
\begin{equation}
T\geq \text{Diag}(\lambda_0,0,\lambda_0,0)
\end{equation}
as bilinear forms.  In doing so, the constants $a,b,c$ and $\lambda_0$ depend only on $M$ (and so does $C_0$). For instance, assuming that $M\geq 1$, we could take $a=\frac{1}{25M}$, $b=\frac{1}{200M^2}$, $c=\frac{1}{800M^3}$ and $\lambda_0= \frac{1}{440M^2}$. Then, following the outline above, we obtain a rate of convergence $\lambda$ given by \eqref{EqLambda} which depends only on $M$ and the spectral gap constant $\kappa$.

This shows that  we can get rid of the dependence of the number $N$ of particles, if we can verify the boundedness condition \eqref{HypV2Eq} with a constant $M$ independent of $N$.


\section{Bounded interaction assumption}

We compute at first the Hessian of the interaction potential:
\[
  \nabla^2_{x_ix_j}\left(\frac{1}{2N}\sum\limits_{1\leq k,l\leq N}W(x_k-x_l)\right)
   = \left\{
\begin{aligned}
\frac{1}{N}\sum\limits_{k:k\neq i}\nabla^2W(x_i-x_k),\quad \mbox{if } i=j;\\
-\frac{1}{N} \nabla^2W(x_i-x_j),\quad \mbox{if } i\neq j.
\end{aligned}
\right.
\]
Denote it by $H_{ij}$ for $1\leq i,j\leq N$. It is clear that $H_{ii}=-\sum_{j:j\neq i} H_{ij}$. Put
\[
H_W:=(H_{ij})_{1\leq i,j\leq N},
\]
\[
H_U:= \text{Diag}(\nabla^2 U(x_1),\nabla^2 U(x_2),\cdots,\nabla^2 U(x_N)).
\]
Then we get
\begin{equation}\label{EqHessV}
  \nabla^2 V(x)= (\nabla_{x_ix_j}^2 V(x))_{1\leq i,j\leq N} = H_U + H_{W}.
\end{equation}

We begin by giving an upper bound for the operator norm of the matrix $H_W(x)$. For a real number $r$, as usual, we denote its positive part by $r^+$ and its negative part by $r^{-}$.

\begin{lem}\label{LemHessW}If $|\nabla^2W(y)|_{op}\leq K$ for all $y\in\RR^d$, then
\[
|H_W(x)|_{op}\leq K
\]
for all $x\in \RR^{Nd}$. More precisely, it holds
\begin{enumerate}
  \item If $\nabla^2W\leq \lambda_M I_{d}$, then $H_W(x)\leq \lambda_M^{+} I_{Nd}$ ;
  \item If $\nabla^2W\geq \lambda_m I_{d}$ , then $H_W(x)\geq -\lambda_m^{-} I_{Nd}$.
\end{enumerate}
where the inequalities are understood in the sense of quadratic forms.
\end{lem}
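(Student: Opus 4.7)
The plan is to reduce everything to a single rearrangement identity that turns the quadratic form $\langle\xi,H_W(x)\xi\rangle$ (for $\xi=(\xi_1,\dots,\xi_N)\in\RR^{Nd}$) into a sum of ``pairwise differences'' against copies of $\nabla^2 W$. Once that identity is in hand, the three claimed bounds follow mechanically from the corresponding pointwise bounds on $\nabla^2 W$.

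Concretely, I would first expand using the block structure of $H_W$:
\[
\langle\xi,H_W(x)\xi\rangle \;=\; \sum_{i}\langle\xi_i,H_{ii}\xi_i\rangle + \sum_{i\neq j}\langle\xi_i,H_{ij}\xi_j\rangle.
\]
Substituting $H_{ii}=\frac1N\sum_{k\neq i}\nabla^2 W(x_i-x_k)$ and $H_{ij}=-\frac1N\nabla^2 W(x_i-x_j)$ for $i\ne j$, and using that $W$ is even so $\nabla^2W(x_i-x_j)=\nabla^2W(x_j-x_i)$ is symmetric in the pair $\{i,j\}$, the cross and diagonal contributions combine into
\[
\langle\xi,H_W(x)\xi\rangle \;=\; \frac{1}{2N}\sum_{i\ne j}\bigl\langle \xi_i-\xi_j,\; \nabla^2 W(x_i-x_j)(\xi_i-\xi_j)\bigr\rangle.
\]
This is the key identity; deriving it cleanly is the only delicate step, and it essentially amounts to the standard ``$\sum_{i,j}a_{ij}(u_i-u_j)^2$'' reorganization for a symmetric interaction.

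Next I would combine this identity with the elementary computation
\[
\sum_{i\ne j}|\xi_i-\xi_j|^2 \;=\; 2N|\xi|^2 - 2\Bigl|\sum_i\xi_i\Bigr|^2 \;\le\; 2N|\xi|^2.
\]
For (1), if $\nabla^2 W\le\lambda_M I_d$, then each term in the sum is at most $\lambda_M^+|\xi_i-\xi_j|^2$ (when $\lambda_M\le 0$ one uses the trivial bound $0$ instead), so $\langle\xi,H_W\xi\rangle\le \frac{\lambda_M^+}{2N}\cdot 2N|\xi|^2=\lambda_M^+|\xi|^2$. For (2), if $\nabla^2 W\ge\lambda_m I_d$, then each term is at least $-\lambda_m^-|\xi_i-\xi_j|^2$, and the same computation gives $\langle\xi,H_W\xi\rangle\ge -\lambda_m^-|\xi|^2$. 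Finally, the operator-norm bound $|H_W|_{op}\le K$ is just the special case $\lambda_M=K$, $\lambda_m=-K$ of (1) and (2) combined, which follows immediately from $-K I_d\le \nabla^2 W\le K I_d$.

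I do not anticipate any real obstacle beyond the bookkeeping: the only conceptual step is the symmetrization producing the $(\xi_i-\xi_j)$ form, and after that everything is linear algebra. The whole argument is elementary and entirely pointwise in $x$, so no assumption on $U$ or on $m$ enters.
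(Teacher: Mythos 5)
Your proposal is correct and follows essentially the same route as the paper: the same symmetrization identity $\langle\xi,H_W\xi\rangle=\frac{1}{2N}\sum_{i\neq j}\langle\xi_i-\xi_j,\nabla^2W(x_i-x_j)(\xi_i-\xi_j)\rangle$, the same bound via $\sum_{i\neq j}|\xi_i-\xi_j|^2=2N|\xi|^2-2\bigl|\sum_i\xi_i\bigr|^2$, and the operator-norm claim obtained by combining the two one-sided estimates with $\lambda_M=K$, $\lambda_m=-K$. No gaps; the only difference is cosmetic (the paper derives the identity via $H_{ii}=-\sum_{j\neq i}H_{ij}$, which is exactly your substitution rearranged).
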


\begin{rmq}
  The coefficient in the above lemma is in fact optimal. Consider $d=1$ and $W(y)=\frac{1}{2}y^2$. In this case, set $p=(1,1,\cdots,1)^T\in\RR^N$, and the matrix $NH_W= N I_{Nd} - pp^T= N \Pi_{p^{\perp}}$ where $\Pi_{p^{\perp}}$ denotes the projection onto the subspace which is perpendicular to $p$. Hence  $H_W$  has two eigenvalues, $1$ and $0$. It follows that the operator norm of $H_W$ is $1$.
\end{rmq}

\begin{proof}Here we use the notation $\langle\cdot,\cdot\rangle$ for the scalar product in the Euclidean spaces. Fix $x=(x_1,x_2,\cdots,x_N)\in\RR^{Nd}$. Let $z=(z_1,z_2,\cdots,z_N)$ where $z_i\in \RR^d$ for $1\leq i\leq N$. Since $H_{ii}=-\sum_{j:j\neq i} H_{ij}$ and $H_{ij}=H_{ji}$, we have
\begin{eqnarray*}
\langle z,H_W z \rangle
&= &
\sum\limits_{j\neq i} \langle z_i,H_{ij}(z_j-z_i)\rangle
=  \sum\limits_{i\neq j} \langle z_j,H_{ji}(z_i-z_j)\rangle\\
&= & -\frac{1}{2} \sum\limits_{i\neq j} \langle z_i-z_j,H_{ji}(z_i-z_j)\rangle\\
&=& \frac{1}{2N} \sum\limits_{i\neq j} \langle z_i-z_j,\nabla^2W(x_i-x_j)\cdot(z_i-z_j)\rangle.
\end{eqnarray*}

\begin{enumerate}
  \item Assume $\nabla^2W\leq \lambda_M I_{d}$, then
  \[
  \langle z_i-z_j,\nabla^2W(x_i-x_j)\cdot(z_i-z_j)\rangle\leq \lambda_M|z_i-z_j|^2
  \]
  and therefore
  \begin{eqnarray*}
  \langle z,H_W z \rangle
  &\leq& \frac{\lambda_M}{2N} \sum\limits_{i\neq j} |z_i-z_j|^2
  =  \frac{\lambda_M}{N} \left(N|z|^2 - |\sum\limits_{i}z_i|^2\right) \\
  &\leq&  \lambda_M^{+}|z|^2.
  \end{eqnarray*}
  \item Assume $\nabla^2W\geq \lambda_m I_{d}$,then
  \[
  \langle z_i-z_j,\nabla^2W(x_i-x_j)\cdot(z_i-z_j)\rangle\geq \lambda_m|z_i-z_j|^2
  \]
  and therefore
  \begin{eqnarray*}
  \langle z,H_W z \rangle
  &\geq& \frac{\lambda_m}{2N} \sum\limits_{i\neq j} |z_i-z_j|^2
  =  \frac{\lambda_m}{N} \left(N|z|^2 - |\sum\limits_{i}z_i|^2\right) \\
  &\geq& - \lambda_m^{-}|z|^2.
  \end{eqnarray*}
  \item $|\nabla^2W|_{op}\leq K$ means that $-K I_{d} \leq \nabla^2W\leq KI_{d}$. By parts (1) and (2), this implies that $-KI_{Nd} \leq H_W\leq KI_{Nd}$ as quadratic forms and hence $|H_W|_{op}\leq K$.
\end{enumerate}
\end{proof}

Lemma \ref{LemHessW} allows us to reduce the boundedness condition \eqref{HypV2Eq} to a simpler one,

\begin{lem}\label{LemVU}Suppose that $|\nabla^2W|_{op} \leq K$. Suppose that there exist positive constants $C_1, C_2$ such that for each $i$ and for all $g\in H^1(m)$,
\begin{equation}\label{HypV2Eq*}
  \int |\nabla^2 U(x_i)|_{op}^2  g^2\dd m \leq C_1\int |\nabla_x g|^2\dd m +  C_2\int g^2\dd m.
\end{equation}
Then  the boundedness condition \eqref{HypV2Eq} is satisfied with a constant $M$ given by
\begin{equation}\label{MLemVU}
  M=\max\{2C_1,2C_2+2K^2\}.
\end{equation}
\end{lem}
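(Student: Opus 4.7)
The plan is to reduce \eqref{HypV2Eq} to the scalar Lyapunov-type assumption \eqref{HypV2Eq*} by exploiting the block decomposition $\nabla^2 V = H_U + H_W$ established in \eqref{EqHessV}, with $H_U$ block-diagonal and $H_W$ uniformly bounded in operator norm thanks to Lemma \ref{LemHessW}.

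First I would split, via the elementary bound $(a+b)^2\le 2a^2+2b^2$,
\[
|\nabla_x^2 V(x)\cdot \nabla_v h|^2 \le 2|H_U\,\nabla_v h|^2 + 2|H_W\,\nabla_v h|^2,
\]
and dispose of the interaction piece immediately: by Lemma \ref{LemHessW}, $|H_W|_{op}\le K$, so $|H_W\,\nabla_v h|^2\le K^2|\nabla_v h|^2$ pointwise. This already produces the $2K^2\int|\nabla_v h|^2\,\dd\mu$ contribution to $M$.

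Next, the block-diagonal structure of $H_U=\text{Diag}(\nabla^2 U(x_1),\dots,\nabla^2 U(x_N))$ gives
\[
|H_U\,\nabla_v h|^2 = \sum_{i=1}^N |\nabla^2 U(x_i)\cdot \nabla_{v_i}h|^2 \le \sum_{i=1}^N |\nabla^2 U(x_i)|_{op}^2\,|\nabla_{v_i}h|^2.
\]
Now I fix $v$ and apply the hypothesis \eqref{HypV2Eq*} in the position variables to the scalar function $g(x) := |\nabla_{v_i}h(x,v)|$, which sits in $H^1(m)$ for a.e.\ $v$. The key pointwise tool is Kato's inequality: for the $\RR^d$-valued map $f=\nabla_{v_i}h$,
\[
\bigl|\nabla_x |f|\bigr| \le |\nabla_x f|_{HS},
\]
which one sees by differentiating $|f|^2$. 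Combined with \eqref{HypV2Eq*}, this yields
\[
\int |\nabla^2 U(x_i)|_{op}^2\,|\nabla_{v_i}h|^2\,\dd m(x) \le C_1\int |\nabla_x\nabla_{v_i}h|_{HS}^2\,\dd m(x) + C_2\int |\nabla_{v_i}h|^2\,\dd m(x),
\]
for each fixed $v$.

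Finally, I would integrate this over $v$ against $\dd\gamma$, sum over $i=1,\dots,N$, and use the identities $\sum_i|\nabla_x\nabla_{v_i}h|_{HS}^2 = |\nabla_{xv}^2 h|_{HS}^2$ and $\sum_i|\nabla_{v_i}h|^2=|\nabla_v h|^2$. Adding back the $H_W$ contribution and collecting constants produces
\[
\int |\nabla_x^2V\cdot\nabla_v h|^2\,\dd\mu \le 2C_1\int |\nabla_{xv}^2h|_{HS}^2\,\dd\mu + (2C_2+2K^2)\int |\nabla_v h|^2\,\dd\mu,
\]
which is exactly \eqref{HypV2Eq} with $M=\max\{2C_1,\,2C_2+2K^2\}$. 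The only mildly delicate point is the Kato-type step that allows one to apply the scalar inequality \eqref{HypV2Eq*} to $|\nabla_{v_i}h|$ while paying only the full Hilbert--Schmidt norm of the mixed Hessian; otherwise the argument is a bookkeeping of the block decomposition.
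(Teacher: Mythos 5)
Your proof is correct and follows essentially the same route as the paper's: split $\nabla^2_xV=H_U+H_W$, bound $|H_W|_{op}\le K$ via Lemma \ref{LemHessW}, and apply \eqref{HypV2Eq*} in the $x$-variables for fixed $v$ before integrating in $\dd\gamma$ and summing over $i$. The only (minor) difference is that the paper applies \eqref{HypV2Eq*} componentwise with $g=\partial_{v_{il}}h$ and sums over $l$, which yields the same bound $C_1\int|\nabla^2_{xv}h|_{HS}^2\dd\mu+C_2\int|\nabla_v h|^2\dd\mu$ while sidestepping your Kato-type step for $g=|\nabla_{v_i}h|$ (your step is also valid, but the componentwise choice avoids the differentiability-of-the-norm technicality altogether).
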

\begin{proof}
Under the assumptions and using $\nabla^2V=H_U+H_W$, by Lemma \ref{LemHessW}, we have
\begin{eqnarray*}
\int |\nabla_x^2 V\cdot \nabla_v h|^2\dd\mu
&\leq& 2\int \left(|H_U\cdot \nabla_{v} h|^2 + |H_W\cdot\nabla_{v} h|^2\right)\dd\mu\\
&\leq& 2\int \sum\limits_{1\leq i\leq N}|\nabla^2 U(x_i)|_{op}^2 |\nabla_{v_i} h|^2\dd\mu + 2K^2 \int |\nabla_{v} h|^2\dd\mu.
\end{eqnarray*}

We estimate these terms separately. Apply the inequality \eqref{HypV2Eq*} with $g=\partial_{v_{il}} h$ (here $v_{il}$ is the $l$-th variable of $v_i\in \RR^d$) for $1\leq i\leq N$ and $1\leq l\leq d$, we get
\[
\int |\nabla^2 U(x_i)|_{op}^2 |\partial_{v_{il}} h|^2\dd\mu \leq \int \left[ C_1  \int |\nabla_{x}\partial_{v_{il}}h|^2\dd m(x) + C_2\int|\partial_{v_{il}}h|^2\dd m(x) \right]\dd\gamma(v)
\]
Summing over $i$ and $l$, we have
\[
\int\sum\limits_{1\leq i\leq N} |\nabla^2 U(x_i)|_{op}^2  |\nabla_{v_i}h|^2\dd\mu \leq C_1\int |\nabla^2_{xv}h|_{HS}^2\dd\mu + C_2 \int |\nabla_{v}h|^2\dd\mu.
\]
and so
\[
\int |\nabla_x^2 V\cdot \nabla_v h|^2\dd\mu \leq 2C_1\int |\nabla^2_{xv}h|_{HS}^2\dd\mu + (2C_2+2K^2) \int |\nabla_{v}h|^2\dd\mu.
\]
i.e. the boundedness condition \eqref{HypV2Eq} is satisfied with the constant $M$ given in \eqref{MLemVU}.
\end{proof}

\section{Proof of Theorem \ref{Thm1}}
Let $\H$ be the elliptic generator associated to the mean field measure $m$, that is,
\begin{eqnarray*}
  \H &=& \Delta_x - \nabla V(x)\cdot \nabla_x \\
     &=& \Delta_x - \sum\limits_{1\leq i\leq N}\left(\nabla U(x_i) + \frac{1}{N}\sum\limits_{1\leq j\leq N}\nabla W(x_i-x_j)\right)\cdot\nabla_{x_i}\\
     &=& \sum\limits_{1\leq i\leq N} \H_i
\end{eqnarray*}
where
\[
\H_i=\Delta_{x_i} - \nabla U(x_i)\cdot \nabla_{x_i} - \frac{1}{N}\sum\limits_{1\leq j\leq N}\nabla W(x_i-x_j)\cdot\nabla_{x_i}.
\]

The following known lemma is a key to the Lyapunov type conditions, it was initially proved in \cite{BBCG} to get a Poincar\'e inequality. We include its simple proof for completeness.
\begin{lem}\label{LemLy2}Let $\H$ and $m$ be defined as above, then for all twice-differentiable function $S>0$  and for all $g\in H^1(m)$,
\begin{equation}
\int -\frac{\H S}{S}g^2\dd m \leq \int |\nabla g|^2\dd m.
\end{equation}
\end{lem}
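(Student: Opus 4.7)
The plan is to exploit the fact that $\H$ is the symmetric (Dirichlet) generator of $m$, so integration by parts gives $\int (\H f) h \, dm = -\int \nabla f \cdot \nabla h \, dm$ for nice $f,h$. The natural pairing is to test the identity against $h = g^2/S$: this will transfer the $1/S$ factor onto the gradient and produce a mixed term that can be bounded by $|\nabla g|^2$ via Young's inequality, while the remaining quadratic-in-$\nabla S/S$ term will cancel an identical positive contribution. This is a standard device behind ``Lyapunov-implies-Poincar\'e'' arguments as in \cite{BBCG}.

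More precisely, I would proceed as follows. First, write
\[
\int \frac{\H S}{S}\, g^2 \, dm \;=\; \int (\H S)\, \frac{g^2}{S}\, dm \;=\; -\int \nabla S \cdot \nabla\!\left(\frac{g^2}{S}\right) dm,
\]
using the integration-by-parts formula for $\H$ with respect to $m$ (justified by standard density/cut-off arguments since $S$ is twice differentiable, $S>0$, and $g\in H^1(m)$). Expanding
\[
\nabla\!\left(\frac{g^2}{S}\right) \;=\; \frac{2g\, \nabla g}{S} - \frac{g^2\, \nabla S}{S^2},
\]
one obtains
\[
-\int \frac{\H S}{S}\, g^2 \, dm \;=\; \int \frac{2g\, \nabla S \cdot \nabla g}{S}\, dm \;-\; \int \frac{g^2\, |\nabla S|^2}{S^2}\, dm.
\]

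Second, apply Young's inequality $2ab \leq a^2 + b^2$ pointwise to the cross term with $a = g\nabla S/S$ and $b = \nabla g$, which yields
\[
\frac{2g\, \nabla S \cdot \nabla g}{S} \;\leq\; \frac{g^2\, |\nabla S|^2}{S^2} + |\nabla g|^2.
\]
Integrating and substituting in the previous identity, the two $g^2|\nabla S|^2/S^2$ terms cancel exactly, leaving
\[
-\int \frac{\H S}{S}\, g^2 \, dm \;\leq\; \int |\nabla g|^2 \, dm,
\]
which is the claim.

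The only delicate point is the justification of the integration by parts when $S$ may grow (so that $\H S/S$ is unbounded); this is the standard obstacle in Lyapunov arguments and is handled by a truncation of $S$ (e.g. replacing $S$ by $S\wedge n$ or by $S/(1+\varepsilon S)$) and a monotone/dominated convergence passage, together with the fact that $g$ has finite $H^1(m)$ norm. Once this approximation step is in place, the rest is purely algebraic.
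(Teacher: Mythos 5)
Your proof is correct and follows essentially the same route as the paper: integrate by parts against the test function $g^2/S$, expand the gradient, and bound the cross term by Young's inequality so that the $g^2|\nabla S|^2/S^2$ terms cancel. Your extra remark on justifying the integration by parts via truncation is a sensible addition that the paper's own (very brief) proof leaves implicit.
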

\begin{proof}Indeed, an integration by parts gives
\begin{eqnarray*}
\int -\frac{\H S}{S}g^2\dd m
   &\leq& \int \langle \nabla S, \nabla \frac{g^2}{S}\rangle \dd m(x) \\
   &\leq& \int \langle \nabla S, \frac{2g\nabla g}{S}- \frac{g^2\nabla S}{S^2}\rangle \dd m(x) \\
   &\leq& \int |\nabla g|^2\dd m
\end{eqnarray*}
where the last inequality follows from
\[
\langle 2g\nabla g, \frac{\nabla S}{S}\rangle\leq \frac{g^2|\nabla S|^2}{S^2}+ |\nabla g|^2.
\]
\end{proof}

This second lemma is the heart of the proof. It uses Lyapunov conditions, yet well know for being highly dimensional, but at the marginal level, thus providing results independent of the number of particles.

\begin{lem}\label{PropLy2}
Suppose that the Lyapunov condition \eqref{Thm1EqLy} holds, i.e. there exists positive constants $K_1,K_2$ such that
\[
|\nabla^2 U|_{op}\leq K_1 |\nabla U|+ K_2.
\]
Then for all $g\in H^1(m)$,
\[
\int |\nabla^2 U(x_i)|_{op}^2  g^2\dd m \leq C_1\int |\nabla_x g|^2\dd m +  C_2\int g^2\dd m
\]
with $C_1, C_2$ given by
\begin{equation}\label{PropLy2C}
C_1= 50K_1^2  ,\quad C_2= 4K_2^2 + \frac{25K_1^4d^2}{4}+ \frac{25K'^2K_1^2}{2}.
\end{equation}
\end{lem}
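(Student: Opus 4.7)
The plan is to reduce the bound on $\int |\nabla^2 U(x_i)|_{op}^2 g^2\dd m$ to a bound on $\int |\nabla U(x_i)|^2 g^2 \dd m$, and then control the latter by applying Lemma \ref{LemLy2} to an exponential one-particle Lyapunov function $S(x)=\exp(\alpha U(x_i))$ for a well chosen $\alpha$.

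First I would use the Lyapunov hypothesis together with $(a+b)^2\leq 2a^2+2b^2$ to write
\[
|\nabla^2 U(x_i)|_{op}^2\ \leq\ 2K_1^2|\nabla U(x_i)|^2 + 2K_2^2,
\]
which, once integrated against $g^2\dd m$, yields the $2K_2^2\int g^2\dd m$ contribution towards $C_2$ and reduces the problem to estimating $\int |\nabla U(x_i)|^2 g^2\dd m$. Next, since $S(x)=e^{\alpha U(x_i)}$ depends only on $x_i$, a direct computation using $\nabla_{x_i}V=\nabla U(x_i)+N^{-1}\sum_j\nabla W(x_i-x_j)$ gives
\[
-\frac{\H S}{S}\ =\ \alpha(1-\alpha)|\nabla U(x_i)|^2 - \alpha\Delta U(x_i) + \frac{\alpha}{N}\sum_{j=1}^N\nabla W(x_i-x_j)\cdot\nabla U(x_i).
\]
Combining the Lyapunov bound $|\Delta U|\leq d|\nabla^2 U|_{op}\leq d(K_1|\nabla U|+K_2)$ with $|\nabla W|\leq K'$ yields the pointwise lower bound
\[
-\frac{\H S}{S}\ \geq\ \alpha(1-\alpha)|\nabla U(x_i)|^2 - \alpha dK_1|\nabla U(x_i)| - \alpha K'|\nabla U(x_i)| - \alpha d K_2.
\]

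The heart of the proof is the bookkeeping of Young's inequalities so that the announced constants $C_1$ and $C_2$ come out exactly. Taking $\alpha=1/10$ and applying Young separately to the $\alpha dK_1|\nabla U|$ and $\alpha K'|\nabla U|$ terms with parameters $\delta_1=1/25$ and $\delta_2=1/100$ makes the surviving coefficient of $|\nabla U(x_i)|^2$ equal to $\alpha(1-\alpha)-\delta_1-\delta_2=9/100-4/100-1/100=1/25$. Invoking Lemma \ref{LemLy2} then produces
\[
\int |\nabla U(x_i)|^2 g^2\dd m\ \leq\ 25\int |\nabla_x g|^2\dd m + \left[\frac{25d^2 K_1^2}{16}+\frac{25K'^2}{4}+\frac{5dK_2}{2}\right]\int g^2\dd m.
\]

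Finally I would multiply by $2K_1^2$, add the $2K_2^2\int g^2\dd m$ from the first step, and absorb the only residual cross term by one last Young inequality $5dK_1^2K_2\leq \frac{25d^2K_1^4}{8}+2K_2^2$. The $|\nabla g|^2$ coefficient becomes exactly $50K_1^2=C_1$, while the $g^2$ coefficient collapses to $\frac{25K_1^4 d^2}{8}+\frac{25K'^2K_1^2}{2}+5dK_1^2K_2+2K_2^2\leq \frac{25K_1^4 d^2}{4}+\frac{25K'^2K_1^2}{2}+4K_2^2=C_2$. The main obstacle is precisely this tuning: choosing $\alpha$ and the two Young parameters so that no singular $K_2^2/K_1^2$ type remainder survives, and the values above are dictated by matching the three independent monomials $K_1^4 d^2$, $K_1^2 K'^2$ and $K_2^2$ appearing in $C_2$.
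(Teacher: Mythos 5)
Your proof is correct and is essentially the paper's own argument: you use the same one--coordinate Lyapunov function (your $S=e^{U(x_i)/10}$ is exactly the paper's $e^{\alpha U(x_i)/2}$ with $\alpha=1/5$), the same key Lemma \ref{LemLy2}, the bound $|\Delta U|\le d|\nabla^2U|_{op}$ together with the Lyapunov condition, and the bound $K'|\nabla U|$ for the interaction term, and your constants check out, matching \eqref{PropLy2C} exactly. The only difference is bookkeeping: the paper first derives $|\nabla^2U|_{op}^2\le \eta_1\bigl((1-\alpha)|\nabla U|^2-\Delta U\bigr)+\eta_2$ so that the generator supplies $(1-\alpha)|\nabla U|^2-\Delta U$ directly, whereas you square the Lyapunov condition first and absorb $|\Delta U|$ and the cross term $5dK_1^2K_2$ by Young's inequality afterwards.
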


\begin{proof}
Step 1: We show that the Lyaunov condition $|\nabla^2 U|_{op}\leq K_1 |\nabla U|+ K_2$ implies
\begin{equation}\label{CondLya2}
|\nabla^2U|_{op}^2 \leq \eta_1((1-\alpha)|\nabla U|^2-\Delta U)+ \eta_2.
\end{equation}
where
\[
\eta_1=5K_1^2,\eta_2= 4K_2^2+ \frac{25K_1^4d^2}{4}, \mbox{ and } \alpha=\frac{1}{5}.
\]

Indeed, note that
\[
C\Delta U \leq Cd|\nabla^2U|_{op}\leq \epsilon |\nabla^2 U|_{op}^2 + \frac{C^2d^2}{4\epsilon}
\]
for $\epsilon>0$ and $C>0$. And the condition $|\nabla^2 U|_{op}\leq K_1 |\nabla U|+ K_2$ implies
\[
|\nabla^2 U|^2_{op}\leq 2K_1^2 |\nabla U|^2+ 2K_2^2
\]
Then we have
\begin{eqnarray*}
  |\nabla^2U|_{op}^2 + C \Delta U
   &\leq & (1+ \epsilon) |\nabla^2 U|_{op}^2 + \frac{C^2d^2}{4\epsilon} \\
   & =& 2(1+ \epsilon) K^2_1|\nabla U|^2 + 2(1+ \epsilon)K_2^2 + \frac{C^2d^2}{4\epsilon}
\end{eqnarray*}
or
\begin{equation}
|\nabla^2U|_{op}^2 \leq C\left[\frac{2(1+\epsilon)K_1^2}{C}|\nabla U|^2 -\Delta U\right] + 2(1+ \epsilon)K_2^2 + \frac{C^2d^2}{4\epsilon}
\end{equation}
The desired inequality \eqref{CondLya2} follows by taking $\epsilon=1, C=5K_1^2$.

Step 2. We take $S(x)=e^{\alpha U(x_i)/2}$ and compute
\[
\frac{\H S}{S}=\frac{\H_i S}{S}= \frac{\alpha}{2}\left( \Delta U(x_i) + (\frac{\alpha}{2}-1) |\nabla U|^2(x_i) -\frac{1}{N}\sum\limits_{j}\nabla W(x_i-x_j) \cdot \nabla U(x_i)\right)
\]
Since $|\nabla W|\leq K'$, we have
\begin{eqnarray*}
 -\frac{1}{N}\sum\limits_{j}\nabla W(x_i-x_j) \cdot \nabla U(x_i)  &\leq&  K' |\nabla U|(x_i)\\
   &\leq&  \frac{K'^2}{2\alpha} +  \frac{\alpha}{2}|\nabla U|^2(x_i)
\end{eqnarray*}
and so
\begin{eqnarray*}
 \frac{2\H S}{\alpha S} \leq \Delta U(x_i) + (\alpha-1) |\nabla U|^2(x_i)+ \frac{K'^2}{2\alpha} 
\end{eqnarray*}
or
\[
(1-\alpha)|\nabla U|^2(x_i)-\Delta U(x_i) \leq -\frac{2\H S}{\alpha S}+ \frac{K'^2}{2\alpha}
\]
Therefore, by the inequality obtained in Step 1,
\[
|\nabla^2 U(x_i)|_{op}^2 \leq  \eta_1(-\frac{2\H S}{\alpha S}+ \frac{K'^2}{2\alpha})+ \eta_2
\]
Integrating with respect to $g^2\dd m$, we obtain
\begin{eqnarray*}
  \int |\nabla^2 U(x_i)|_{op}^2  g^2\dd m  &\leq & \frac{2\eta_1}{\alpha} \int -\frac{\H S}{S}g^2\dd m + (\eta_2+ \frac{K'^2\eta_1}{2\alpha}) \int g^2\dd m\\
   &\leq& \frac{2\eta_1}{\alpha} \int|\nabla g|^2\dd m + (\eta_2+ \frac{K'^2\eta_1}{2\alpha}) \int g^2\dd m
\end{eqnarray*}
where the last inequality follows from Lemma \ref{LemLy2}.
\end{proof}

\begin{proof}[Proof of Theorem \ref{Thm1}]
By the Lyapunov condition \eqref{Thm1EqLy} in the assumptions, we can apply Lemma \ref{PropLy2} and obtain that for any $g\in H^1(m)$, it holds
\[
\int |\nabla^2 U(x_i)|_{op}^2  g^2\dd m \leq C_1\int |\nabla_x g|^2\dd m +  C_2\int g^2\dd m
\]
with $C_1, C_2$ given by \eqref{PropLy2C} for instance which are independent of the number $N$ of particles.

Next, using Lemma \ref{LemVU}, the boundedness condition \eqref{HypV2Eq} holds with $M$ given by
\[
 M=\max\{2C_1,2C_2+2K^2\}.
\]
We apply Villani's Hypocoercivity theorem \ref{Thm-VillaniH1} and then obtain the result.
\end{proof}

\section{Proof of Theorem \ref{Thm2}}
The next results extend the ones in the previous section to unbounded $\nabla W$. Instead, we shall require that the mean field measure $m$ satisfies the Uniform Logarithmic Sobolev Inequality. We prove the following estimate first, relying only on the variational formulation of entropy.

\begin{lem}\label{lemLy3LSI}Assume that the measure $m$ satisfies a log-Sobolev inequality with a constant $C_{LS}$. For $0< \tau < \frac{1}{4 C_{LS}}$ given and for each $i$ fixed, it holds for all suitably integrable function $g$ that
\begin{equation}
\int \frac{1}{N-1}\sum\limits_{j:j\neq i}|x_i-x_j|^2 g^2 \dd m \leq
\frac{2C_{LS}}{\tau} \int |\nabla g|^2\dd m + \frac{d\ln(1-4\tau C_{LS})^{-1}}{2\tau}\int g^2\dd m.
\end{equation}
In particular, taking $\tau = \frac{1}{8 C_{LS}}$, it holds
\begin{equation}
\int \frac{1}{N-1}\sum\limits_{j:j\neq i}|x_i-x_j|^2 g^2 \dd m \leq
16C_{LS}^2 \int |\nabla g|^2\dd m + 4\ln{2} \cdot dC_{LS}\int g^2\dd m.
\end{equation}
\end{lem}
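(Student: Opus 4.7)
The strategy combines entropic duality with Herbst's concentration consequence of the log-Sobolev inequality. By the well-known variational formula for entropy, for any $F$ with $\int e^F \dd m < \infty$,
\begin{equation*}
  \int F g^2 \dd m \leq \ent_m(g^2) + \left(\int g^2 \dd m\right) \log \int e^F \dd m.
\end{equation*}
Fixing $j \neq i$ and applying this with $F = \tau |x_i - x_j|^2$, then invoking the hypothesized log-Sobolev inequality $\ent_m(g^2) \leq 2 C_{LS} \int |\nabla g|^2 \dd m$, the claim reduces to the single Laplace-transform estimate
\begin{equation*}
  \log \int e^{\tau |x_i - x_j|^2} \dd m \leq \frac{d}{2}\log(1 - 4 \tau C_{LS})^{-1},\qquad 0<\tau<\frac{1}{4C_{LS}}.
\end{equation*}
Dividing by $\tau$ and averaging the $N-1$ resulting inequalities (one for each $j\neq i$) would then give the first claimed inequality, and the specialization $\tau = 1/(8 C_{LS})$ (which makes $1-4C_{LS}\tau = 1/2$) immediately produces the second.

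To establish the Laplace-transform bound I would linearize the quadratic exponent by the Gaussian identity
\begin{equation*}
  e^{\tau |y|^2} = \EE_G \exp\!\bigl(\sqrt{2\tau}\,\langle G, y\rangle\bigr),\qquad G \sim \mathcal{N}(0, I_d),\; y \in \RR^d,
\end{equation*}
and apply Fubini to obtain $\int e^{\tau|x_i-x_j|^2}\dd m = \EE_G \int e^{\sqrt{2\tau}\,\langle G,\,x_i-x_j\rangle}\dd m$. For each fixed $G$, the function $x \mapsto \langle G, x_i - x_j\rangle$ on $\RR^{Nd}$ has gradient $G$ in the $x_i$-block, $-G$ in the $x_j$-block, and zero elsewhere, hence is $\sqrt{2}\,|G|$-Lipschitz. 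Crucially, since $V$ is invariant under permutation of particle indices (this is where we use that $W$ is even), the measure $m$ is exchangeable, so $\int(x_i - x_j)\dd m = 0$. Herbst's standard consequence of the LSI then yields the centered exponential bound
\begin{equation*}
  \int \exp\!\bigl(\sqrt{2\tau}\,\langle G, x_i - x_j\rangle\bigr) \dd m \leq \exp\!\bigl(2 C_{LS}\,\tau\,|G|^2\bigr).
\end{equation*}

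Integrating in $G$ and using the classical chi-square identity $\EE\bigl[e^{\alpha |G|^2}\bigr] = (1 - 2\alpha)^{-d/2}$ valid for $\alpha<1/2$, applied with $\alpha = 2 C_{LS}\tau$, produces exactly
\begin{equation*}
  \int e^{\tau |x_i - x_j|^2} \dd m \leq (1 - 4 C_{LS}\tau)^{-d/2},
\end{equation*}
which is the required Laplace-transform estimate. The one place where I expect genuine care is the zero-mean reduction: without the exchangeability of $m$, Herbst would produce an additional term $\sqrt{2\tau}\,\langle G, \int(x_i-x_j)\dd m\rangle$ whose Gaussian integral, while still finite, would spoil the clean $N$-independent constants. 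Everything else is bookkeeping of the Gaussian linearization and of the standard LSI$\Rightarrow$Herbst chain.
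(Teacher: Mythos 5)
Your proposal is correct and takes essentially the same route as the paper: the entropy variational formula combined with the log-Sobolev inequality, Gaussian linearization of the quadratic exponent, Herbst's exponential bound for the centered Lipschitz linear functional (Lipschitz constant $2\sqrt{\tau}|G|$), and the chi-square identity, yielding exactly the same constants; the only cosmetic difference is that you apply the entropy inequality pairwise and average over $j\neq i$, whereas the paper applies it once to the averaged function $F=\frac{1}{N-1}\sum_{j\neq i}|x_i-x_j|^2$ and then uses convexity of the exponential together with the symmetry of $m$. One micro-remark: the exchangeability of $m$ does not actually require $W$ to be even, since the double sum $\sum_{i,j}W(x_i-x_j)$ is permutation invariant as it stands; evenness of $W$ is used elsewhere in the paper, e.g.\ to get $\nabla W(0)=0$.
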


\begin{proof}Put \[F(x)= \frac{1}{N-1}\sum\limits_{j:j\neq i}|x_i-x_j|^2 \]
Since the measure $ m$ satisfies a log-Sobolev inequality, we can apply the classical entropy inequality
\[
\int f g^2 \dd m \leq \ent_{m}(g^2) + \int g^2\dd m \log\int e^{f}\dd m
\]
with $f=\tau F$. Then, for any $\tau>0$ such that $c_2=\log \int e^{\tau F}\dd m $ is finite, we obtain
      \begin{eqnarray*}
       \int F  g^2\dd m&\leq&  \frac{1}{\tau} \ent_{m}(g^2) + \frac{1}{\tau} \int g^2\dd m \log \int e^{\tau F } \dd m \\
      &\leq & \frac{2C_{LS}}{\tau}  \int |\nabla_{x}g|^2\dd m + \frac{c_2}{\tau} \int g^2\dd m
      \end{eqnarray*}
where the last inequality follows from the log Sobolev inequality for $m$.

Now it remains to give an upper bound of $\int e^{\tau F}\dd m$. Thanks to the symmetry of $m(\dd x_1,\dd x_2,\cdots,\dd x_N)$, we find
\begin{eqnarray*}
  \int e^{\tau F}\dd m &\leq& \int \frac{1}{N-1}\sum\limits_{j:j\neq i}e^{\tau|x_i-x_j|^2}\dd m(x) \\
  &=& \int e^{\tau |x_1-x_2|^2}\dd m(x)
\end{eqnarray*}
Let $\dd\gamma_1(y)= (2\pi)^{-d/2}e^{-|y|^2/2}\dd y$ be the standard gaussian measure on $\RR^d$. Due to the identity $e^{\tau |x|^2}= \int e^{\sqrt{2\tau}x\cdot y}\dd\gamma_1(y)$, we have
\begin{eqnarray*}
  \int e^{\tau |x_1-x_2|^2}\dd m(x)
  &=& \int \int e^{\sqrt{2\tau}(x_1-x_2)\cdot y} \dd\gamma_1(y)\dd m(x)\\
   &=& \int \dd\gamma_1(y) \int e^{\sqrt{2\tau}(x_1-x_2)\cdot y}\dd m(x)
\end{eqnarray*}
For any given $y\in \RR^d$, the function $\sqrt{2\tau}(x_1-x_2)\cdot y$ has mean zero w.r.t the measure $m$. Indeed this is a consequence of symmetry,
\[
\int (x_1-x_2)\cdot y \dd m(x)=\int x_1\cdot y \dd m(x)-\int x_2\cdot y \dd m(x)=0.
\]
And note that $\sqrt{2\tau}(x_1-x_2)\cdot y$ is a Lipschitz function of $x$  with Lipschitz constant $2\sqrt{\tau}|y|$. Therefore, according to the exponential integrability under a logarithmic Sobolev inequality (see \cite[Chapter 5]{BGL} for instance), the function $\sqrt{2\tau}(x_1-x_2)\cdot y$ satisfies
\[
\int e^{\sqrt{2\tau}(x_1-x_2)\cdot y}\dd m(x) \leq e^{2\tau|y|^2C_{LS}}
\]
for any $y\in \RR^d$. Hence, if $0<\tau< 1/(4C_{LS})$, we obtain
\begin{eqnarray*}
  \int e^{\tau F}\dd m &\leq& \int e^{2\tau C_{LS} |y|^2}\dd\gamma_1(y) \\
  &= & (1-4\tau C_{LS})^{-d/2}
\end{eqnarray*}
and then the desired estimate follows.
\end{proof}

\begin{lem}\label{PropLy4}Suppose that the mean field measure $m$ satisfies a log-Sobolev inequality with a constant $C_{LS}$. Suppose the Lyapunov condition \eqref{Thm1EqLy}
and
\[
|\nabla^2 W|_{op}\leq K.
\]
Then, for all $g\in H^1(m)$,
\[
 \int |\nabla^2 U(x_i)|_{op}^2  g^2\dd m  \leq C_1\int|\nabla g|^2\dd m + C_2 \int g^2\dd m
\]
with the constants $C_1, C_2$ given by
\begin{equation}\label{PropLy4C}
C_1= 50K_1^2 (1+ 4K^2C_{LS}^2) ,\quad C_2= 4K_2^2+\frac{25K_1^4d^2}{4}+ 50\ln 2 \cdot dK^2K_1^2C_{LS}.
\end{equation}
\end{lem}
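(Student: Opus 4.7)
The plan is to reproduce the argument of Lemma \ref{PropLy2} almost verbatim, and only re-do the single estimate in which the assumption $|\nabla W|\le K'$ was used. The new ingredient will be Lemma \ref{lemLy3LSI}, which plays here the role formerly held by the crude pointwise bound on $|\nabla W|$.

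First I would recall Step 1 of the proof of Lemma \ref{PropLy2}: the Lyapunov condition \eqref{Thm1EqLy} gives the unchanged pointwise inequality
\[
|\nabla^2 U|_{op}^2 \le \eta_1\bigl((1-\alpha)|\nabla U|^2-\Delta U\bigr)+\eta_2,\qquad \eta_1=5K_1^2,\ \eta_2=4K_2^2+\tfrac{25K_1^4 d^2}{4},\ \alpha=\tfrac{1}{5}.
\]
Then I would take the same Lyapunov-type function $S(x)=e^{\alpha U(x_i)/2}$ and compute, as in Step 2 of Lemma \ref{PropLy2},
\[
\frac{\H S}{S}=\frac{\alpha}{2}\Bigl(\Delta U(x_i)+(\tfrac{\alpha}{2}-1)|\nabla U(x_i)|^2-\tfrac{1}{N}\sum_{j}\nabla W(x_i-x_j)\cdot\nabla U(x_i)\Bigr).
\]

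The only step that genuinely changes is the control of the interaction term. Since $W$ is even, $\nabla W(0)=0$, so $|\nabla^2 W|_{op}\le K$ gives $|\nabla W(y)|\le K|y|$. I would then combine Young's inequality with Cauchy--Schwarz (summing first over $j\ne i$, since $j=i$ contributes $0$):
\[
\Bigl|\tfrac{1}{N}\sum_{j}\nabla W(x_i-x_j)\cdot\nabla U(x_i)\Bigr|\le \tfrac{\alpha}{2}|\nabla U(x_i)|^2+\tfrac{K^2}{2\alpha}\cdot\tfrac{1}{N-1}\sum_{j\ne i}|x_i-x_j|^2.
\]
Inserting this into the identity for $\H S/S$ and rearranging yields the pointwise substitute for the bound used in Lemma \ref{PropLy2}:
\[
(1-\alpha)|\nabla U(x_i)|^2-\Delta U(x_i)\le -\tfrac{2\H S}{\alpha S}+\tfrac{K^2}{2\alpha}\cdot F(x),\qquad F(x):=\tfrac{1}{N-1}\sum_{j\ne i}|x_i-x_j|^2.
\]

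Combining this with Step 1, integrating against $g^2\,dm$, and applying Lemma \ref{LemLy2} to the $-\H S/S$ term gives
\[
\int |\nabla^2 U(x_i)|_{op}^2 g^2\,dm\le \tfrac{2\eta_1}{\alpha}\int|\nabla g|^2\,dm+\tfrac{\eta_1 K^2}{2\alpha}\int F\,g^2\,dm+\eta_2\int g^2\,dm.
\]
At this point the ULSI enters decisively: I would apply Lemma \ref{lemLy3LSI} (the specialization with $\tau=1/(8C_{LS})$) to bound $\int F g^2 dm\le 16C_{LS}^2\int|\nabla g|^2dm+4\ln 2\cdot dC_{LS}\int g^2 dm$. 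Plugging in $\alpha=1/5$, $\eta_1=5K_1^2$, $\eta_2=4K_2^2+25K_1^4d^2/4$ and collecting the gradient and zeroth-order coefficients gives exactly the constants \eqref{PropLy4C}. The main conceptual obstacle — replacing the bounded-gradient hypothesis on $W$ by a condition that is still stable in $N$ — is precisely where Lemma \ref{lemLy3LSI} is crafted to fit: the ULSI delivers a uniform-in-$N$ exponential moment for $|x_1-x_2|^2$, and the symmetry of $m$ reduces the full second-moment average to a single pairwise moment, so no dimensional blow-up appears.
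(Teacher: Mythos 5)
Your proposal is correct and follows essentially the same route as the paper's proof: same Step 1 pointwise inequality, same Lyapunov function $S=e^{\alpha U(x_i)/2}$, the same Young/Cauchy--Schwarz treatment of the interaction term combined with $|\nabla W(z)|\le K|z|$ (you apply this bound pointwise slightly earlier, the paper after integrating, which is immaterial), and the same invocation of Lemmas \ref{LemLy2} and \ref{lemLy3LSI} with $\tau=1/(8C_{LS})$, yielding exactly the constants \eqref{PropLy4C}.
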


\begin{proof}
As in the proof of Lemma \ref{PropLy2}, the Lyapunov condition \eqref{Thm1EqLy} implies
\begin{equation}\label{Cond.Ly1}
|\nabla^2U|_{op}^2 \leq \eta_1((1-\alpha)|\nabla U|^2-\Delta U)+ \eta_2
\end{equation}
with $\eta_1=5K_1^2,\eta_2= 4K_2^2+ \frac{25K_1^4d^2}{4}$ and $\alpha=\frac{1}{5}$.

Consider $S(x)=e^{\alpha U(x_i)/2}$ and compute
\[
\frac{\H S}{S}= \frac{\alpha}{2}\left( \Delta U(x_i) + (\frac{\alpha}{2}-1) |\nabla U|^2(x_i) -\frac{1}{N}\sum\limits_{j:j\neq i}\nabla W(x_i-x_j) \cdot \nabla U(x_i)\right)
\]
By Cauchy-Schwartz inequality, it holds
\begin{eqnarray*}
 -\frac{1}{N}\sum\limits_{1\leq j\leq N}\nabla W(x_i-x_j) \cdot \nabla U(x_i)
   &\leq&  \frac{1}{2\alpha}|\frac{1}{N}\sum\limits_{1\leq j\leq N}\nabla W(x_i-x_j)|^2+ \frac{\alpha}{2}|\nabla U|^2(x_i)\\
   &\leq & \frac{1}{2\alpha N}\sum\limits_{1\leq j\leq N}|\nabla W(x_i-x_j)|^2+ \frac{\alpha}{2} |\nabla U|^2(x_i)
\end{eqnarray*}
and so
\begin{eqnarray*}
 \frac{2\H S}{\alpha S} &\leq& \Delta U(x_i) + (\alpha-1) |\nabla U|^2(x_i)+ \frac{1}{2\alpha N}\sum\limits_{1\leq j\leq N}|\nabla W(x_i-x_j)|^2
\end{eqnarray*}
Using the assumption on $\nabla^2U$, we have
\[
|\nabla^2 U(x_i)|_{op}^2\leq \eta_1\left(-\frac{2\H S}{\alpha S} + \frac{1}{2\alpha N}\sum\limits_{1\leq j\leq N}|\nabla W(x_i-x_j)|^2\right)+ \eta_2
\]
Integrating with respect to $g^2\dd m$, we obtain by lemma \ref{LemLy2}
\begin{eqnarray*}
  \int |\nabla^2 U(x_i)|_{op}^2  g^2\dd m  &\leq & \frac{2\eta_1}{\alpha} \int -\frac{\H S}{S}g^2\dd m + \eta_2 \int g^2\dd m +\frac{\eta_1}{2\alpha}\Theta \\
  &\leq & \frac{2\eta_1}{\alpha}\int|\nabla g|^2\dd m +  \eta_2 \int g^2\dd m +\frac{\eta_1}{2\alpha}\Theta
\end{eqnarray*}
with
\[
\Theta := \frac{1}{  N}\int\sum\limits_{1\leq j\leq N}|\nabla W(x_i-x_j)|^2g^2\dd m.
\]

To prove the lemma, it remains to show that
\begin{equation}\label{IneqLyW}
   \Theta \leq 16K^2C_{LS}^2 \int |\nabla g|^2\dd m + 4\ln{2} \cdot dK^2C_{LS}\int g^2\dd m
\end{equation}
Since $W$ is even, we see that $\nabla W(0)=0$. Then it follows from the assumption $|\nabla^2W|_{op}\leq K$ that
\[
|\nabla W(z)|\leq |\nabla W(0)|+ K|z|\leq K|z|
\]
therefore
\[
\Theta \leq \int \frac{1}{N-1}\sum\limits_{j:j\neq i}|\nabla W(x_i-x_j) |^2 g^2\dd m \leq K^2\int \frac{1}{N-1}\sum\limits_{j:j\neq i}|x_i-x_j|^2 g^2\dd m
\]
So we can apply the lemma \ref{lemLy3LSI} to get the inequality \eqref{IneqLyW} and the proof is then complete.
\end{proof}

Now we turn to the

\begin{proof}[Proof of Theorem \ref{Thm2}]
By the Lyapunov condition \eqref{Thm1EqLy} in the assumptions, we can apply Lemma \ref{PropLy4} and obtain that for any $g\in H^1(m)$, it holds
\[
\int |\nabla^2 U(x_i)|_{op}^2  g^2\dd m \leq C_1\int |\nabla_x g|^2\dd m +  C_2\int g^2\dd m
\]
with $C_1, C_2$ given by \eqref{PropLy4C}. Note that these constants are independent of the number $N$ of particles.

Next, owing to Lemma \ref{LemVU}, we know the boundedness condition \eqref{HypV2Eq} holds with
\[
 M=\max\{2C_1,2C_2+2K^2\}
\]
We apply Villani's Hypocoercivity theorem \ref{Thm-VillaniH1} and then obtain the convergence with rates independent of the number $N$ of particles.
\end{proof}

\section{An improvement on the rate of convergence}
The boundedness conditions proved in the previous sections share the following form
\[
\int |\nabla_x^2 V\cdot \nabla_v h|^2\dd\mu \leq M_1\int |\nabla^2_{xv}h|_{HS}^2\dd\mu + M_2 \int |\nabla_{v}h|^2\dd\mu.
\]
where the coefficients $M_1$ and $M_2$ might be
\[
M_1=2C_1,\quad M_2=2C_2+2K^2
\]
with constants $C_1$ and $C_2$ being given in \eqref{PropLy2C} or \eqref{PropLy4C}. Note that $C_1$ and $C_2$ depend on $K_1$ and $K_2$ in the Lyapunov condition \eqref{Thm1EqLy}
\[
 |\nabla^2U|_{op} \leq K_1|\nabla U|+ K_2.
\]
It is clear that $K_1$ is related to the asymptotic behaviour of $\nabla^2 U$ and $\nabla U$ at infinity, while $K_2$ is more relevant to the local properties. For instance, when $U$ behaves as a polynomial at infinity, $K_1$ can be taken to be arbitrarily close to zero (with the price of $K_2$ being large); consequently, $M_1$ might be very small while $M_2$ might be large. This suggests that in general we can obtain a boundedness condition with very different $M_1$ and $M_2$.

In this section, we shall take advantage of this fact and get a slight improvement on the rate of convergence $\lambda$. As mentioned before, the rate of convergence in \cite[Theorem 35]{Villani} is of order $M^{-2}$, as $M\rightarrow \infty$ with $M=\max\{1,M_1,M_2\}$. However, by distinguishing the two constants $M_1$ and $M_2$, the rate can be improved to be of order $M_2^{-1/2}$ for small $M_1$ and big $M_2$.

\begin{prop}
If the following boundedness condition holds,
\[
\int |\nabla_x^2 V\cdot \nabla_v h|^2\dd\mu \leq M_1\int |\nabla^2_{xv}h|_{HS}^2\dd\mu + M_2 \int |\nabla_{v}h|^2\dd\mu,
\]
then the rate of convergence $\lambda$ can be taken to be of order $\frac{1}{\sqrt{M_2}}$ for small $M_1$ and big $M_2$.
\end{prop}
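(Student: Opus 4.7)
The plan is to carry the two constants $M_1$ and $M_2$ separately through the Cauchy--Schwarz step producing the matrix $T$ in (3.8), and then to re-optimize the free parameters $a, b, c$ so as to extract a spectral gap scaling like $1/\sqrt{M_2}$ rather than Villani's $M^{-2}$.

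First I revisit the step producing (3.8). The only places where $\nabla_x^2 V \cdot \nabla_v h$ appears in the expansion (3.7) of $((h,Lh))$ are the two inner products $-b\langle \nabla_v h, \nabla_x^2 V \cdot \nabla_v h\rangle$ and $-c\langle \nabla_x h, \nabla_x^2 V \cdot \nabla_v h\rangle$. Using the refined hypothesis in the form $\|\nabla_x^2 V \cdot \nabla_v h\| \leq \sqrt{M_1}\,\|\nabla^2_{xv} h\| + \sqrt{M_2}\,\|\nabla_v h\|$ (obtained from $\sqrt{u+v} \leq \sqrt{u}+\sqrt{v}$ applied to the stated $L^2$ bound) together with Cauchy--Schwarz on the outer factor, I arrive at $((h,Lh)) \geq \langle Z, \tilde T Z\rangle$ with $Z = (\|\nabla_v h\|, \|\nabla_v^2 h\|, \|\nabla_x h\|, \|\nabla^2_{xv}h\|)$ and a refined symmetric $4\times 4$ matrix $\tilde T$: the diagonal is $(1+a-b\sqrt{M_2},\,a,\,b,\,c)$, the entries $\tilde T_{13} = -(a+b+c\sqrt{M_2})/2$ and $\tilde T_{24}=-b$ are as in (3.8) with $\sqrt{M}$ replaced by $\sqrt{M_2}$, while the entries $\tilde T_{14} = -b\sqrt{M_1}/2$ and $\tilde T_{34} = -c\sqrt{M_1}/2$ carry only $M_1$.

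Next I perform a scaling analysis of the requirement $\tilde T \geq \text{Diag}(\lambda_0, 0, \lambda_0, 0)$. When $M_1 = 0$, the matrix $\tilde T$ decomposes as a direct sum on the index pairs $\{1,3\}$ and $\{2,4\}$, so PSD reduces to $ac \geq b^2$ together with $(1 + a - b\sqrt{M_2} - \lambda_0)(b - \lambda_0) \geq (a + b + c\sqrt{M_2})^2/4$. Writing $a = a_0 M_2^{-\alpha}$, $b = b_0 M_2^{-\beta}$, $c = c_0 M_2^{-\gamma}$, $\lambda_0 = \ell_0 M_2^{-\delta}$ and comparing powers of $M_2$, the three competing contributions on the right-hand side force $\alpha \geq \beta/2$ and $\gamma \geq (1+\beta)/2$, while $ac \geq b^2$ forces $\alpha + \gamma \leq 2\beta$. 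Their consistency requires $\beta \geq 1/2$, and the saturating choice $(\alpha, \beta, \gamma, \delta) = (1/4, 1/2, 3/4, 1/2)$ with explicit constants such as $a_0 = b_0 = c_0 = 1/4$, $\ell_0 = 1/6$ makes both inequalities hold strictly at leading order in $M_2$, yielding $\lambda_0 \asymp 1/\sqrt{M_2}$.

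Finally, for $M_1$ small enough relative to $M_2$, the off-diagonal entries $\tilde T_{14}, \tilde T_{34}$ are perturbations of order $\sqrt{M_1/M_2}$ and $\sqrt{M_1}\,M_2^{-3/4}$ respectively, which are absorbed by the strict slack obtained above; so the coercivity estimate $((h,Lh)) \geq \lambda_0(\|\nabla_v h\|^2 + \|\nabla_x h\|^2)$ persists with $\lambda_0 \asymp 1/\sqrt{M_2}$. Composing with the Section~3 formula $\lambda = \lambda_0 \min\{1/(2a+1),\, \kappa/(2c\kappa + 1)\}$ and noting that $a, c \to 0$ as $M_2 \to \infty$ so that the minimum tends to $\min\{1, \kappa\}$, yields the claimed $\lambda \asymp 1/\sqrt{M_2}$. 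The heart of the argument is the three-exponent optimization forcing $\beta = 1/2$; controlling the $M_1$-perturbation is routine by comparison, once the strict slack at leading order has been produced.
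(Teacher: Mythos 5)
Your overall scheme is the same as the paper's: split the bound as $\|\nabla_x^2V\cdot\nabla_v h\|\le\sqrt{M_1}\,\|\nabla^2_{xv}h\|+\sqrt{M_2}\,\|\nabla_v h\\|$, reduce coercivity to positive semi-definiteness of a $4\times4$ matrix in which only the entries $(1,1)$, $(1,3)$ carry $\sqrt{M_2}$ while $(1,4)$, $(3,4)$ carry only $\sqrt{M_1}$, and then scale $a\sim M_2^{-1/4}$, $b\sim M_2^{-1/2}$, $c\sim M_2^{-3/4}$, $\lambda_0\sim b$; your exponent analysis forcing $\beta\ge 1/2$ is exactly the content of the paper's remark that $1/\sqrt{M_2}$ is the best possible order, and your final passage through $\lambda=\lambda_0\min\{1/(2a+1),\kappa/(2c\kappa+1)\}$ matches the paper. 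The only structural difference is that you treat $M_1$ perturbatively off the $M_1=0$ block-diagonal case, whereas the paper builds the $M_1$-terms directly into its sufficient conditions (its inequalities (7.3)); both are legitimate.

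However, your explicit constants do not do what you claim, and the perturbation step leans on this. With $a_0=b_0=c_0=\tfrac14$, $\ell_0=\tfrac16$ you get $a_0c_0=b_0^2$, so $ac\ge b^2$ holds only with equality (no slack in the $\{2,4\}$ block), and in the $\{1,3\}$ block the two sides are merely saturated at leading order: the left side is $\bigl(1+a-b\sqrt{M_2}-\lambda_0\bigr)(b-\lambda_0)=\tfrac34\cdot\tfrac1{12}M_2^{-1/2}+\tfrac1{48}M_2^{-3/4}+\dots=\tfrac1{16}M_2^{-1/2}+\tfrac1{48}M_2^{-3/4}+\dots$, while the right side is $\tfrac14\bigl(\tfrac12 M_2^{-1/4}+\tfrac14M_2^{-1/2}\bigr)^2=\tfrac1{16}M_2^{-1/2}+\tfrac1{16}M_2^{-3/4}+\dots$, so the required inequality actually fails at order $M_2^{-3/4}$. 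Consequently there is no ``strict slack at leading order'' available, and the absorption of the $(1,4)$ and $(3,4)$ entries of size $\sqrt{M_1}\,b/2$ and $\sqrt{M_1}\,c/2$ has nothing to be absorbed into as written. This is repairable without changing your exponents: choose strictly smaller prefactors (e.g. impose $b\ge(a+b+c\sqrt{M_2})^2$ with comfortable margins, as the paper does with $\beta=576/25921$, $\alpha=\tfrac{16}{3}\beta$, $\gamma=\tfrac38\beta$, $\lambda_0=b/4$), reserving fixed fractions of the diagonal entries $b-\lambda_0$ and $c$ for the $M_1$-couplings; then your perturbative absorption goes through for bounded $M_1$ and the conclusion $\lambda\asymp 1/\sqrt{M_2}$ stands.
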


\begin{rmq}We consider mainly the behaviour of $\lambda$ when $M_2$ is large while $M_1$ is small. For specific $M_1$ and $M_2$, an refinement of the method is always needed to get a better rate of convergence.
\end{rmq}

\begin{proof}
We set in this proof that $M=\max\{1,M_2\}$. By Cauchy-Schwartz inequality and the boundedness condition above,
\begin{eqnarray*}
  - \langle \nabla_vh,  \nabla_x^2V\cdot\nabla_v h\rangle&\geq& -||\nabla_vh|| ||\nabla_x^2V\cdot\nabla_v h||  \\
   &\geq&  -||\nabla_vh|| \sqrt{M_1||\nabla^2_{xv}h|^2 + M_2 ||\nabla_{v}h||^2} \\
   &\geq& -||\nabla_vh|| (\sqrt{M_1}||\nabla^2_{xv}h| + \sqrt{M_2} ||\nabla_{v}h||)
\end{eqnarray*}
Similarly,
\[
- \langle\nabla_xh,\nabla^2_xV\cdot\nabla_vh\rangle \geq -||\nabla_xh|| (\sqrt{M_1}||\nabla^2_{xv}h| + \sqrt{M_2} ||\nabla_{v}h||)
\]
This leads to $$((h,Lh))\geq \langle Z, T'Z\rangle $$ with a matrix $T'$ given by
\[
T'=\begin{pmatrix}
1+a-b\sqrt{M_2}& 0 & -(a+b+c\sqrt{M_2} )/2& -b\sqrt{M_2}/2\\
0 & a          & 0            & -b\\
-( a+b+c\sqrt{M_2})/2 & 0          & b            & -c\sqrt{M_1}/2\\
-b\sqrt{M_1}/2 & -b         & -c\sqrt{M_1}/2           &c
\end{pmatrix}.
\]
Denote
\[
S=(S_{ij})_{1\leq i,j\leq 4}:= T'- \text{Diag}(\lambda_0,0,\lambda_0,0),
\]
\[
Z=(Z_1,Z_2,Z_3,Z_4).
\]
The object is then choose $a,b,c$ such that $S$ is positive definite. If now it is assumed that
\begin{equation}\label{Eq7-1}
  b\sqrt{M_2}\leq \frac14,\quad \lambda_0=\frac{b}{4}\leq \frac14,
\end{equation}
then
\begin{equation}\label{Eq7-2}
  S_{11}=1+a-b\sqrt{M_2}- \lambda_0\geq a+ \frac12, \quad S_{33}=b-\lambda_0= \frac34 b.
\end{equation}

And if we impose furthermore the conditions below
\begin{equation}\label{Eq7-3}
\frac12 \cdot \frac{b}{2}\geq \left(\frac{a+b+c\sqrt{M_2}}{2}\right)^2,\quad a\cdot \frac{c}{8}\geq \left(\frac{b\sqrt{M_1}}{2}\right)^2, \quad a\cdot \frac{c}{2}\geq b^2,\quad \frac{b}{4}\cdot \frac{3c}{8}\geq \left(\frac{c\sqrt{M_1}}{2}\right)^2,
\end{equation}
then we have
\[
\frac12 Z_1^2 + \frac{b}{2} Z_3^2\geq |2S_{13}Z_1Z_3|,\quad aZ_1^2 +  \frac{c}{8}Z_4^2\geq|2S_{14}Z_1Z_4|,
\]
\[
a Z_2^2 + \frac{c}{2} Z_4^2\geq |2S_{24}Z_2Z_4|,\quad \frac{b}{4}Z_3^2 +  \frac{3c}{8}Z_4^2\geq |2S_{34}Z_3Z_4|,
\]
and it follows that
\begin{eqnarray*}
  \langle Z, SZ\rangle &=& S_{11}Z_1^2 + S_{22}Z_2^2 + S_{33}Z_3^2 + S_{44}Z_4^2 +2S_{13}Z_1Z_3+ 2S_{14}Z_1Z_4 + 2S_{24}Z_2Z_4 + 2S_{34}Z_3Z_4\\
   &\geq& S_{11}Z_1^2 + S_{22}Z_2^2 + S_{33}Z_3^2 + S_{44}Z_4^2 - (\frac12 Z_1^2 + \frac{b}{2} Z_3^2) -(aZ_1^2 +  \frac{c}{8}Z_4^2)\\
   & & -(a Z_2^2 + \frac{c}{2} Z_4^2)-(\frac{b}{4}Z_3^2 +  \frac{3c}{8}Z_4^2)\\
   &=& (S_{11}-\frac12-a)Z_1^2  + (S_{33}-\frac{3b}{4})Z_3^2 \\
   &\geq& 0
\end{eqnarray*}
where the last inequality follows from \eqref{Eq7-2}.

\textbf{Case 1:} To fix ideas, we consider the case $M_1\leq1$ first. In this case, we may take $M_1$ as $1$, then the conditions \eqref{Eq7-3} become
\begin{equation}\label{Eq7-3a}
b\geq \left(a+b+c\sqrt{M_2}\right)^2,\quad ac\geq 2b^2,\quad \frac{3}{8}b\geq c.
\end{equation}
For the moment let $\alpha,\beta,\gamma$ be the constants such that
\[
a=\alpha /\sqrt[4]{M}, \quad b= \beta/ \sqrt[4]{M}^2,  c= \gamma/ \sqrt[4]{M}^3.
\]
then, since  $M=\max\{1,M_2\}\geq 1$, it suffices that
\begin{equation}\label{Eq7-3b}
\beta\geq (\alpha + \beta + \gamma)^2,\quad \alpha \gamma \geq 2\beta^2, \quad \frac{3}{8}\beta \geq \gamma
\end{equation}
where $\beta\leq 1/4$ (so that $b\sqrt{M_2}\leq 1/4$). To conclude we may take all these inequalities to be equalities, and in this case
\[
\beta = \frac{576}{25921},\quad \alpha = \frac{16}{3}\beta = \frac{3072}{25921},\quad \gamma= \frac{3}{8}\beta=\frac{216}{25921}.
\]
and then
\[
\lambda_0= \frac{b}{4}= \frac{144}{25921\sqrt{M}}.
\]
Recall the equality \eqref{EqLambda} says that
$$\lambda=\lambda_0\min\{\frac{1}{2a+1}, \frac{\kappa}{2c\kappa+1}\}= \frac{144}{25921\sqrt{M}}\min\{\frac{1}{\frac{6144}{25921\sqrt[4]{M}}+1}, \frac{\kappa}{\frac{512\kappa}{25921\sqrt[4]{M}^3}+1}\}.$$

In particular, note that $M=\max\{1,M_2\}=M_2$ for $M_2\geq 1$, hence the rate of convergence $\lambda$ is of order $1/\sqrt{M_2}$ for large $M_2$.

\textbf{Case 2:} Now we consider the case $M_1>1$. The conditions \eqref{Eq7-3} become
\begin{equation}\label{Eq7-3a}
b\geq \left(a+b+c\sqrt{M_2}\right)^2,\quad ac\geq 2M_1b^2,\quad \frac{3}{8}b\geq M_1c.
\end{equation}
The solution to the corresponding system of equalities is given by
\[
b=\frac{1}{\left(\frac{16}{3}M_1^2 +1 + \frac{3\sqrt{M_2}}{8M_1}\right)^2},\quad a = \frac{16}{3}M_1^2b,\quad c= \frac{3}{8M_1} b.
\]
which gives a rate of convergence of order $M_2^{-1}$ for large $M_2$. Or we can proceed as in Case $1$, and we may take
\[
b=\frac{1}{\left(\frac{16}{3}M_1^2 +1 + \frac{3}{8M_1}\right)^2\sqrt{M}},\quad a = \frac{16}{3\sqrt[4]{M}}M_1^2,\quad c= \frac{3}{8M_1\sqrt[4]{M}^3} .
\]
which gives a rate of convergence of order $1/\sqrt{M_2}$ for large $M_2$.
\end{proof}

\begin{rmq}
To ensure the positiveness of the matrix $T'$ in the proof, the constant $\lambda_0$ must satisfy
\[
1+a-b\sqrt{M_2}-\lambda_0\geq 0, \mbox{ and } b-\lambda_0\geq 0.
\]
Assume $a\leq 1$, then the first inequality implies that $b\leq 2/\sqrt{M_2} $ while the second one implies $\lambda_0\leq b$. As a consequence, $\lambda\leq\lambda_0$ is at most of order $1/\sqrt{M_2}$. The rate of convergence stated in the proposition is sharp in this sense.

Consider the matrix $T$ given in \eqref{EsCoMatrixT} in section 3, similarly the rate of convergence $\lambda$ is at most of order $1/\sqrt{M}$. Furthermore, a fine argument shows that the positiveness of $T$ requires that $\lambda_0$ is at most of order $M^{-2}$ as $M$ tends to infinity. So the distinction between $M_1$ and $M_2$ allows us to get a better growth control for $\lambda$ (for large $M_2$).
\end{rmq}

{\bf Acknowledgements: } A. Guillin and C. Zhang are supported by Project EFI ANR-17-CE40-0030 of the French National Research Agency. W. Liu is supported by the NSFC 11731009. These results were first presented at the conference SMAI 2019 in Guidel Plages. C. Zhang are grateful for the organizers for this magnificent event and especially for the mini-symposium ``In\'egalit\'es fonctionnelles en probabilit\'es et analyse, et applications".

\bibliographystyle{plain}

\end{document}